\newcolumntype{L}{>{$}l<{$}}
\newcolumntype{R}{>{$}r<{$}}
\newtheorem{thm}{Theorem}[section]
\newtheorem{cor}[thm]{Corollary}
\newtheorem{question}[thm]{Question}
\theoremstyle{definition}
\newtheorem{example}[thm]{Example}
\numberwithin{equation}{section}
\def\ZZ{\mathbb{Z}}
\def\QQ{\mathbb{Q}}
\def\AA{\mathbb{A}}
\def\FF{\mathbb{F}}
\def\CC{\mathbb{C}}
\def\RR{\mathbb{R}}
\def\Conf{\mathrm{Conf}}
\newcommand\PConf{\mathrm{PConf}}
\def\sgn{\mathrm{sgn}}
\def\multi#1#2{\ensuremath{\left(\kern-.3em\left(\genfrac{}{}{0pt}{}{#1}{#2}\right)\kern-.3em\right)}}
\newcommand{\Sym}{\mathrm{Sym}}
\newcommand{\poly}{\mathrm{Poly}}
\newcommand{\sfr}{\mathrm{sf}}
\newcommand{\sfpoly}{\mathrm{Poly}^{\mathrm{sf}}}
\newcommand{\ch}{\mathrm{ch}}
\newcommand{\lie}{\mathrm{Lie}}
\newcommand{\rank}{\mathrm{rk}}
\newcommand{\Sp}{\mathcal{S}}
\newcommand{\tr}{\mathbf{1}}
\newcommand{\Sgn}{\mathbf{Sgn}}
\newcommand{\Std}{\mathbf{Std}}
\begin{document}

\title[Polynomial factorization statistics]{Polynomial factorization statistics and point configurations in $\RR^3$}

\author{Trevor Hyde}
\address{Dept. of Mathematics\\
University of Michigan \\
Ann Arbor, MI 48109-1043\\
}
\email{tghyde@umich.edu}

\date{December 11th, 2017}

\maketitle

\begin{abstract}
    We use generating functions to relate the expected values of polynomial factorization statistics over $\FF_q$ to the cohomology of ordered configurations in $\RR^3$ as a representation of the symmetric group. Our methods lead to a new proof of the twisted Grothendieck-Lefschetz formula for squarefree polynomial factorization statistics of Church, Ellenberg, and Farb.
\end{abstract}

\section{Introduction}
Arithmetic statistics is the study of rings like $\ZZ$ and $\FF_q[x]$ from a statistical point of view. For example, one may ask: what is the probability that a random integer $m$ in the interval $[1,n]$ is prime? The Prime Number Theorem tells us that
\begin{equation}
\label{eqn prime prob}
    \mathrm{Prob}(m \in [1,n]\text{ is prime}) \approx \frac{1}{\log(n)}
\end{equation}
for sufficiently large $n$. A similar question may be asked about $\FF_q[x]$: what is the probability that a random monic degree $d$ polynomial $f(x) \in \FF_q[x]$ is irreducible? It may be shown that
\begin{equation}
\label{eqn 1}
    \mathrm{Prob}(f(x)\text{ monic degree $d$ is irreducible}) \approx \frac{1}{d}
\end{equation}
for sufficiently large $q$. Note that $\frac{1}{d} = \frac{1}{\log_q (q^d)}$ and there are $q^d$ monic degree $d$ polynomials in $\FF_q[x]$; hence \eqref{eqn 1} parallels \eqref{eqn prime prob}.

We can do much better than this approximate answer to the irreducibility question in $\FF_q[x]$. The number of irreducible monic degree $d$ polynomials in $\FF_q[x]$ is given by \emph{$d$th necklace polynomial} $M_d(q)$,
\[
    \#\{f(x) \text{ irreducible monic degree $d$}\} = M_d(q) := \frac{1}{d}\sum_{e\mid d}\mu(d/e)q^e,
\]
where $\mu$ is the M\"{o}bius function. Hence we have the exact expression,
\begin{equation}
\label{eqn irred prob} 
    \mathrm{Prob}(f(x)\text{ monic degree $d$ is irreducible}) = \frac{1}{d}\sum_{e\mid d}\frac{\mu(d/e)}{q^{d-e}}.
\end{equation}

There is a tendency to focus on the leading terms in asymptotic formulas like \eqref{eqn 1} and to ignore the lower degree terms as noise to be silenced. But if we look at the \emph{exact} answers on the $\FF_q[x]$ side of the analogy we see a different picture. If $d = 6$, then \eqref{eqn irred prob} specializes to
\begin{equation}
\label{eqn first example}
    \mathrm{Prob}(f(x)\text{ monic degree $6$ is irreducible}) = \frac{1}{6}\bigg(1 - \frac{1}{q^3} - \frac{1}{q^4} + \frac{1}{q^5}\bigg).
\end{equation}
Each term in \eqref{eqn first example} corresponds to an intermediate field of the degree 6 extension $\FF_{q^6}/\FF_q$ and the coefficients encode how these fields fit together---far from noise! This is an instance of a more general theme: the exact expressions for arithmetic statistical questions in $\FF_q[x]$ reflect hidden structure which is not apparent from the leading terms alone. In other words, there are no error terms; each term has an interpretation and together they tell a complete story.

In this paper we consider another family of arithmetic statistics questions for which the exact answers exhibit rich structure. We begin with an example based on \cite[Pg. 6]{CEF}. Define the \emph{quadratic excess} $Q(f)$ of a polynomial $f(x) \in \FF_q[x]$ to be
\begin{align*}
    Q(f) = \hspace{.5em}&\#\{\text{reducible quadratic factors of $f(x)$}\}\\
    &- \#\{\text{irreducible quadratic factors of $f(x)$}\},
\end{align*}
where both counts are considered with multiplicity. Note that $Q(f)$ depends only on the number of linear and irreducible quadratic factors of $f(x)$. For instance, if $g(x) = x^2(x+1)(x^2 +1)^4 \in \FF_3[x]$, then $g(x)$ has 3 linear factors and 4 irreducible quadratic factor, hence
\[
    Q(g) = \binom{3}{2} - \binom{4}{1} = -1.
\]
We write $E_d(Q)$ for the expected value of $Q$ on the set $\poly_d(\FF_q)$ of monic degree $d$ polynomials in $\FF_q[x]$. That is,
\[
    E_d(Q) := \frac{1}{q^d}\sum_{f\in \poly_d(\FF_q)}Q(f).
\]
The table below gives the expected value $E_d(Q)$ for small values of $d$.
\begin{center}
\begin{tabular}{c|l}
    $d$ & $E_d(Q)$\\
\hline
    $3$ & $\tfrac{2}{q} + \tfrac{1}{q^2}$ \\
    $4$ & $\tfrac{2}{q} + \tfrac{2}{q^2} + \tfrac{2}{q^3}$\\
    $5$ & $\tfrac{2}{q} + \tfrac{2}{q^2} + \tfrac{4}{q^3} + \tfrac{2}{q^4}$\\
    $6$ & $\tfrac{2}{q} + \tfrac{2}{q^2} + \tfrac{4}{q^3} + \tfrac{4}{q^4} + \tfrac{3}{q^5}$\\
    $10$ & $\tfrac{2}{q} + \tfrac{2}{q^2} + \tfrac{4}{q^3} + \tfrac{4}{q^4} + \tfrac{6}{q^5} + \tfrac{6}{q^6} + \tfrac{8}{q^7} + \tfrac{8}{q^8} + \tfrac{5}{q^9}$\\
\end{tabular}
\end{center}

We note a few remarkable features of these expected values. For each $d$, $E_d(Q)$ is a polynomial in $\frac{1}{q}$ of degree $d - 1$ with \emph{positive integer coefficients}; one should expect the coefficients to be rational numbers, but both the positivity and integrality are not a priori evident. Evaluating the polynomial $E_d(Q)$ at $q = 1$ gives the binomial coefficient $\binom{d}{2}$. The coefficients of $E_d(Q)$ appear to stabilize as $d$ increases with a clear pattern emerging already for $d = 10$, suggesting that the expected values $E_d(Q)$ converge coefficientwise as $d\rightarrow\infty$.

These phenomena are not isolated to the function $Q$ but rather appear for a family of functions on $\poly_d(\FF_q)$ which we call \emph{factorization statistics}. Our main result identifies the structure underlying the expected values of factorization statistics on $\poly_d(\FF_q)$.

\begin{thm}
\label{thm twist intro}
Let $P$ be a function defined on the set $\poly_d(\FF_q)$ of monic degree $d$ polynomials in $\FF_q[x]$ such that $P(f)$ only depends on the degrees of the irreducible factors of $f$. $P$ may also be viewed as a function defined on partitions of $d$, or equivalently as a class function of the symmetric group $S_d$.

Let $\psi_d^k$ be the character of the $S_d$-representation $H^{2k}(\PConf_d(\RR^3),\QQ)$ where $\PConf_d(\RR^3)$ is the ordered configuration space of $d$ distinct points in $\RR^3$ (see Section \ref{section twisted}.) Then the expected value $E_d(P)$ of $P$ on $\poly_d(\FF_q)$ is given by
\[
    E_d(P) := \frac{1}{q^d}\sum_{f\in \poly_d(\FF_q)}P(f) = \sum_{k=0}^{d-1}\frac{\langle P, \psi_d^k\rangle}{q^k},
\]
where $\langle P, \psi_d^k\rangle := \frac{1}{d!}\sum_{\sigma\in S_d} P(\sigma)\psi_d^k(\sigma)$ is the standard inner product of class functions of the symmetric group $S_d$.
\end{thm}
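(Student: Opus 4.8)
The plan is to recast the claimed equality as an identity of symmetric functions through the Frobenius characteristic, prove that identity by computing its two sides separately --- counting polynomials on one side, computing the cohomology of a hyperplane--arrangement complement on the other --- and then reconcile the two answers by means of the classical necklace identity $\sum_{e\mid n}eM_e(q)=q^n$. Concretely, let $\ch$ denote the Frobenius characteristic, the isometry from class functions of $S_d$ to degree-$d$ symmetric functions over $\QQ$, normalised so that $\ch(\chi)=\sum_{\lambda\vdash d}\frac{\chi(\lambda)}{z_\lambda}p_\lambda$ and hence $\langle p_\lambda,\ch(\chi)\rangle=\chi(\lambda)$. For $\lambda\vdash d$ put $c_\lambda(q)=\#\{f\in\poly_d(\FF_q): f\text{ has factorization type }\lambda\}$ and $U_d=\frac1{q^d}\sum_{\lambda\vdash d}c_\lambda(q)p_\lambda$. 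Since $P$ depends only on the factorization type, $\langle\ch(P),U_d\rangle=\frac1{q^d}\sum_\lambda c_\lambda(q)P(\lambda)=E_d(P)$, so the theorem is equivalent to
\[
q^dU_d\;=\;\sum_{\lambda\vdash d}c_\lambda(q)\,p_\lambda\;=\;\sum_{k=0}^{d-1}q^{d-k}\,\ch\!\big(H^{2k}(\PConf_d(\RR^3),\QQ)\big),
\]
the range $0\le k\le d-1$ being forced by $H^{2k}(\PConf_d(\RR^3))=0$ for $k\ge d$; afterwards pairing with $\ch(P)$ and using that $\ch$ is an isometry with $\langle p_\lambda,\ch(\psi_d^k)\rangle=\psi_d^k(\lambda)$ recovers the stated formula for $E_d(P)$.

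\textbf{Arithmetic side.} Unique factorization in $\FF_q[x]$ together with the count of $M_i(q)$ monic irreducibles of each degree $i$ yields the Euler product
\[
\sum_{d\ge0}q^dU_d\;=\;\prod_{i\ge1}\ \prod_{\substack{g\text{ monic irreducible}\\ \deg g=i}}\big(1+p_i+p_i^2+\cdots\big)\;=\;\prod_{i\ge1}(1-p_i)^{-M_i(q)},
\]
an identity of formal power series in the $p_i$ with coefficients in $\ZZ[q]$; expanding $(1-p_i)^{-M_i(q)}=\sum_{m\ge0}\multi{M_i(q)}{m}p_i^m$ and collecting monomials gives $q^dU_d=\sum_{\lambda\vdash d}\big(\prod_i\multi{M_i(q)}{m_i(\lambda)}\big)p_\lambda$, where $m_i(\lambda)$ is the multiplicity of $i$ in $\lambda$. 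The specialisation $p_i\mapsto t^i$ collapses this to $(1-qt)^{-1}$, i.e.\ to the identity $\sum_{e\mid n}eM_e(q)=q^n$, which is the engine of the final step.

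\textbf{Topological side.} By the Arnold/Orlik--Solomon presentation, $H^\bullet(\PConf_d(\RR^3),\QQ)$ is the graded-commutative algebra on classes $G_{ij}\in H^2$ (one per pair $i<j$) modulo the Arnold relations, with $G_{ij}$ pulled back from the fundamental class of $S^2$ along $x\mapsto(x_i-x_j)/|x_i-x_j|$. The transposition $(ij)$ post-composes this map with the antipodal involution of $S^2$, which has degree $-1$, so --- unlike the $\RR^2$ case --- a sign enters the $S_d$-action, and $H^{2k}(\PConf_d(\RR^3))$ is the sign-twist of its $\RR^2$ analogue. Packaging the Lehrer--Solomon description over all $d$ and all cohomological degrees gives a plethystic identity: writing $\mathrm{Exp}$ for the plethystic exponential and $\lie_n$ for the $S_n$-representation on the multilinear part of the free Lie algebra,
\[
\sum_{d\ge0}\ \sum_{k\ge0}\ \ch\!\big(H^{2k}(\PConf_d(\RR^3),\QQ)\big)\,t^{k}\;=\;\mathrm{Exp}\!\Big(\sum_{n\ge1}t^{\,n-1}\,\ch(\lie_n\otimes\sgn_n)\Big);
\]
equivalently, $H^{2k}(\PConf_d(\RR^3))$ splits as a sum over set partitions of $\{1,\dots,d\}$ into $d-k$ blocks, each block $B$ contributing an induced copy of $\bigotimes_B(\lie_{|B|}\otimes\sgn_{|B|})$ in cohomological degree $2(|B|-1)$. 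One can obtain this either from the known hyperplane-arrangement computation, or directly from the Fadell--Neuwirth fibration $\PConf_d(\RR^3)\to\PConf_{d-1}(\RR^3)$ with fibre $\bigvee^{d-1}S^2$ (the Leray spectral sequence degenerates), inducting to promote the visible $S_{d-1}$-equivariance to $S_d$-equivariance.

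\textbf{Matching, and the main obstacle.} It remains to verify that the degree-$d$, $t^{d-k}$-graded part of the generating function above reproduces $\sum_{\lambda\vdash d}\big(\prod_i\multi{M_i(q)}{m_i(\lambda)}\big)p_\lambda$ upon setting $t=q$. Both expressions are plethystic exponentials of an explicit symmetric function carrying powers of $q$, so it suffices to compare their plethystic logarithms: on the topological side this brings in the classical formula $\ch\big(\bigoplus_n\lie_n\big)=-\sum_{k\ge1}\tfrac{\mu(k)}{k}\log(1-p_k)$ together with the sign-twist, while on the arithmetic side $\log\prod_i(1-p_i)^{-M_i(q)}=\sum_{i,m\ge1}\tfrac{M_i(q)}{m}p_i^m$, and the two agree precisely because of the necklace identity $\sum_{e\mid n}eM_e(q)=q^n$. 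I expect this to be the main difficulty: one must keep track of the sign-twist in the Lie characters and, above all, of the double role of $q$ --- both as $|\FF_q|$ and as the Frobenius parameter governing the plethystic substitution $p_i\mapsto q^ip_i$ that converts the grading variable into the weighting $q^{d-k}$ --- so that the two plethystic logarithms can be equated term by term. Once the logarithms are matched, dividing by $q^d$ and pairing against $\ch(P)$ as in the first paragraph closes the argument.
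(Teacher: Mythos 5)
Your overall architecture---the Euler product over monic irreducibles on the arithmetic side, and the final pairing of $\ch(P)$ against the resulting symmetric function---is essentially the paper's argument in symmetric-function clothing: the paper's proof of Theorem \ref{thm rep interp}(1) expands exactly the product $\prod_{j\geq 1}(1-a_jt^j)^{-M_j(q)}$ and substitutes $t=1/q$. The difference is that the paper takes the topological input as a citation (the Hanlon/Hersh--Reiner cycle-index identity $\sum_{d,\lambda}\tfrac{1}{z_\lambda}\sum_k\psi_d^k(\lambda)q^{d-k}a^\lambda t^d=\prod_j(1-a_jt^j)^{-M_j(q)}$, where $\psi_d^k$ is the character of $\lie_d^k\cong H^{2k}(\PConf_d(\RR^3),\QQ)$ by Sundaram--Welker), whereas you propose to prove it---and it is precisely there that your proposal goes wrong. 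Your displayed plethystic identity assigns to each block $B$ the representation $\lie_{|B|}\otimes\sgn_{|B|}$, but for $\RR^3$ the blocks must carry the \emph{untwisted} $\lie(|B|)$: this is the content of $H^{2k}(\PConf_d(\RR^3),\QQ)\cong\lie_d^k$, and your own antipodal-map observation already contradicts your formula, since for $d=2$ the transposition acts on $H^2(\PConf_2(\RR^3),\QQ)\cong H^2(S^2,\QQ)$ by $-1$, so this cohomology is $\Sgn=\lie(2)$, while your size-$2$ block contributes $\lie(2)\otimes\sgn_2=\tr$. The blockwise sign twist belongs to the $\RR^2=\CC$ case (there $H^1(\PConf_2(\CC),\QQ)=\tr$), not to $\RR^3$. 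This is not cosmetic: with the twisted pieces the matching against $\prod_j(1-p_j)^{-M_j(q)}$ already fails at $d=2$; for instance for $P=\sgn$ your identity would give $E_2(\sgn)=0$, whereas the theorem (and a direct count) gives $E_2(\sgn)=1/q$.

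Even with the sign corrected (untwisted $\lie_n$ per block; the plethystic-exponential, i.e.\ symmetric-power, structure is then legitimate because the generators sit in even cohomological degree, so no Koszul signs arise when equal blocks are permuted), the step that carries all the content---that the plethystic logarithm of $\sum_{d,k}q^{d-k}\ch\big(H^{2k}(\PConf_d(\RR^3),\QQ)\big)$ equals $\sum_{i,m\geq 1}\tfrac{M_i(q)}{m}p_i^m$---is only sketched; carrying it out from $\ch\,\lie(n)=\tfrac1n\sum_{i\mid n}\mu(i)p_i^{n/i}$ together with the weighting by $q^{\#\text{blocks}}$ is exactly the product formula the paper imports from Hersh--Reiner (originally Hanlon), so you must either prove it in full or cite it. Separately, your fallback of obtaining the equivariant decomposition ``directly from the Fadell--Neuwirth fibration'' is not routine: that fibration is only $S_{d-1}$-equivariant, and promoting the resulting description to an $S_d$-isomorphism is the genuinely hard point (this is what the Sundaram--Welker subspace-arrangement methods accomplish); degeneration of the spectral sequence alone does not produce the $S_d$-structure. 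As it stands, the one nontrivial input of your argument is both misstated and unproved, while the surrounding reductions (the arithmetic Euler product and the final pairing) are correct and coincide with the paper's.
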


Factorization statistics are functions $P$ satisfying the hypotheses of Theorem \ref{thm twist intro}. Theorem \ref{thm twist intro} relates the expected value of factorization statistics to the representation theory of the symmetric group and the topology of configuration spaces. We return to the quadratic excess example and explore this interplay between representation theory, topology, and the combinatorics of finite fields through examples in Section \ref{section example}.

Recall that $\FF_q[x]$ has unique factorization, hence any monic degree $d$ polynomial $f(x) \in \FF_q[x]$ may be uniquely factored over $\FF_q$ into a product of irreducible polynomials. The degrees of these irreducible factors form a partition $\lambda$ of the degree $d$ which we call the \emph{factorization type} of $f(x)$. Given a partition $\lambda \vdash d$ let $\nu(\lambda)$ denote the probability of a random monic degree $d$ polynomial in $\FF_q[x]$ having factorization type $\lambda$. We call $\nu$ the \emph{splitting measure}. The key to proving Theorem \ref{thm twist intro} is to first give a representation theoretic interpretation of the splitting measure.

\begin{thm}
Let $\psi_d^k$ be the character of the $S_d$-representation $H^{2k}(\PConf_d(\RR^3),\QQ)$ where $\PConf_d(\RR^3)$ is the ordered configuration space of $d$ distinct points in $\RR^3$ (see Section \ref{section twisted}.) Then for all $d\geq 1$ and partitions $\lambda \vdash d$ we have
\[
    \nu(\lambda) = \frac{1}{z_\lambda}\sum_{k=0}^{d-1}\frac{\psi_d^k(\lambda)}{q^k},
\]
where $z_\lambda := \prod_{j\geq 1} j^{m_j} m_j!$ when $\lambda = (1^{m_1}2^{m_2}\cdots)$, and $\psi_d^k(\lambda)$ is the value of the character $\psi_d^k$ on any element of the symmetric group $S_d$ with cycle type $\lambda$.
\end{thm}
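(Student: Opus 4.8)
The plan is to compute both sides of the claimed identity independently and compare. For the left-hand side, I would start from the classical formula for the number of monic degree $d$ polynomials over $\FF_q$ of a given factorization type. If $\lambda = (1^{m_1}2^{m_2}\cdots) \vdash d$, then a polynomial of type $\lambda$ is an unordered product of $m_j$ distinct monic irreducibles of degree $j$ for each $j$, so the count is $\prod_{j\geq 1}\binom{M_j(q)}{m_j}$ where $M_j(q)$ is the $j$th necklace polynomial. Dividing by $q^d$ gives $\nu(\lambda)$. The cleanest route, however, is to package this into a generating function: the cycle-index-type identity
\[
    \sum_{d\geq 0}\sum_{f\in\poly_d(\FF_q)} \prod_{j}p_j^{a_j(f)}\, u^d = \prod_{j\geq 1}\left(\frac{1}{1-p_j u^j}\right)^{M_j(q)} = \exp\left(\sum_{j\geq 1}\sum_{i\geq 1}\frac{M_j(q)}{i}p_j^i u^{ij}\right),
\]
where $a_j(f)$ counts the degree-$j$ irreducible factors of $f$ with multiplicity. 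Using $\sum_{j}M_j(q)u^j/j = -\sum_j \mu\text{-stuff} = \log$-type simplification (concretely $\sum_{j,e: e\mid j}\mu(j/e)q^e u^j/j$ collapses), the inner sum simplifies, and one extracts the coefficient of $u^d$ to read off $\nu(\lambda)$ in terms of $q^{-k}$.

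For the right-hand side I would use the known structure of $H^*(\PConf_d(\RR^3),\QQ)$ as an $S_d$-representation. The Poincaré polynomial and the $S_d$-action on the cohomology of configuration spaces in $\RR^n$ are classical (Arnold, F. Cohen, and for the symmetric-group structure one has the description via the Lie operad / Whitney homology of the partition lattice). Specifically, for $\RR^3$ (odd dimension $3 = 2\cdot 1 + 1$), the cohomology is concentrated in even degrees $2k$, and $\sum_k \psi_d^k(\sigma)\, t^k$ has a clean product formula over the cycles of $\sigma$. I would write this character generating function cycle-type by cycle-type: for a permutation of cycle type $\lambda$, $\sum_{k}\psi_d^k(\lambda)t^k$ factors as a product of contributions $\prod_j (\text{something in } j, t)^{m_j}$ coming from grouping the $m_j$ cycles of length $j$, together with a symmetric-group-of-the-blocks factor. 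Dividing by $z_\lambda$ and setting $t = 1/q$ should produce exactly the polynomial extracted on the arithmetic side.

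The main obstacle is matching the two product formulas term by term — i.e., identifying the generating function $\sum_k \psi_d^k(\lambda)t^k / z_\lambda$ with the coefficient of $u^d$ in the finite-field Euler product after the substitution $t \mapsto q^{-1}$. This requires having the $S_d$-equivariant Poincaré series of $\PConf_d(\RR^3)$ in a sufficiently explicit cycle-type form; the natural tool is the exponential/plethystic generating function $\sum_d \sum_k \ch(\psi_d^k)\, t^k$ expressed as a plethystic exponential of a simple symmetric-function expression (this is where the odd-dimensionality of $\RR^3$ makes the signs work out so that everything is a genuine character rather than a virtual one). Once both sides are written as plethystic exponentials, the identity reduces to comparing the single "building block" — the degree-$j$ contribution — on each side, which is a short computation with necklace polynomials and $\mu$. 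I would also need to handle the bookkeeping that $\psi_d^k$ vanishes for $k \geq d$, matching the degree bound $d-1$ in the stated sum; this follows from $H^{2k}(\PConf_d(\RR^3)) = 0$ for $k \geq d$, equivalently the top nonvanishing cohomological degree being $2(d-1)$.
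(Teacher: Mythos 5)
Your overall strategy is the same as the paper's: encode the counts of polynomials by factorization type in the Euler product $\prod_{j\geq 1}\bigl(1-a_ju^j\bigr)^{-M_j(q)}$, invoke a cycle-type product formula for the $S_d$-equivariant cohomology of $\PConf_d(\RR^3)$ (the higher Lie characters), and compare the two after substituting $1/q$ for the formal variable. Two points need attention. First, your opening count is wrong: a polynomial of type $\lambda$ is \emph{not} a product of $m_j$ distinct irreducibles of degree $j$ --- repeated factors are allowed (e.g.\ $x^2$ has type $(1^2)$), so the number of $f\in\poly_d(\FF_q)$ of type $\lambda$ is $\prod_{j\geq 1}\multi{M_j(q)}{m_j}$, not $\prod_{j\geq 1}\binom{M_j(q)}{m_j}$; the latter counts \emph{squarefree} polynomials and would lead you to the formula for $\nu^{\sfr}$ rather than $\nu$. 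Fortunately the Euler product you then write down, with geometric factors $\bigl(1-p_ju^j\bigr)^{-1}$, already encodes multiplicities correctly, so the generating-function route survives, but as written your two descriptions of the left-hand side contradict each other.

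Second, the ``clean product formula'' you defer to on the cohomology side is exactly the identity the paper quotes from Hersh and Reiner (their Thm.\ 2.17, going back to Hanlon):
\[
    \sum_{d\geq 0}\sum_{\lambda\vdash d}\frac{1}{z_\lambda}\sum_{k=0}^{d-1}\psi_d^k(\lambda)\,q^{d-k}\,a^\lambda t^d \;=\; \prod_{j\geq 1}\left(\frac{1}{1-a_jt^j}\right)^{M_j(q)}.
\]
Once this is in hand there is no further plethystic matching or ``building block'' comparison to perform: the right-hand side is literally the same Euler product as on the arithmetic side, so one expands via $\left(\frac{1}{1-t}\right)^M=\sum_{m\geq 0}\multi{M}{m}t^m$, sets $t=1/q$, and compares coefficients of $a^\lambda$, which is the entirety of the paper's argument. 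Without citing (or proving) this character identity, your step of ``matching the two product formulas'' is where all the real content lives, and your proposal leaves it as an assertion; with the citation, your outline coincides with the paper's proof, including the degree bound $k\leq d-1$ which is built into the identity.
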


\subsection{Related work}

Church, Ellenberg, and Farb \cite{CEF} connect the first moments of factorization statistics on the set $\poly_d^\sfr(\FF_q)$ of \emph{squarefree} monic degree $d$ polynomials to the symmetric group representations carried by the cohomology of configuration space through their twisted Grothendieck-Lefschetz formula for $\poly_d^\sfr(\FF_q)$.

\begin{thm}[{\cite[Prop. 4.1]{CEF}}]
\label{thm CEF twist}
Let $\phi_d^k$ be the character of the $S_d$-representation $H^k(\PConf_d(\CC),\QQ)$ where $\PConf_d(\CC)$ is the ordered configuration space of $d$ distinct points in $\CC$. Let $\poly_d^\sfr(\FF_q)$ denote the set of squarefree monic degree $d$ polynomials in $\FF_q[x]$. Then for any factorization statistic $P$,
\begin{equation}
\label{eqn CEF formula}
    \sum_{f\in\poly_d^\sfr(\FF_q)} P(f) = \sum_{k=0}^{d-1}(-1)^k \langle P, \phi_d^k\rangle q^{d-k},
\end{equation}
where $\langle P, \phi_d^k\rangle := \frac{1}{d!}\sum_{\sigma\in S_d}P(\sigma)\phi_d^k(\sigma)$ is the standard inner product of class functions of the symmetric group $S_d$.
\end{thm}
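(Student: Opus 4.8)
To prove Theorem~\ref{thm CEF twist}, the plan is to run the same generating-function argument that underlies the splitting measure, now for squarefree polynomials, and then to match the output against the cohomology of $\PConf_d(\CC)$. Since a factorization statistic $P$ depends on $f$ only through its factorization type $\lambda(f)\vdash d$, we have
\[
    \sum_{f\in\poly_d^\sfr(\FF_q)}P(f)=\sum_{\lambda\vdash d}P(\lambda)\,N_\lambda^\sfr(q),\qquad N_\lambda^\sfr(q):=\#\{f\in\poly_d^\sfr(\FF_q):\lambda(f)=\lambda\}.
\]
Because $\FF_q[x]$ is a unique factorization domain with exactly $M_j(q)$ monic irreducibles of degree $j$, and a squarefree polynomial is a product of \emph{distinct} irreducibles, $N_\lambda^\sfr(q)=\prod_{j\ge1}\binom{M_j(q)}{m_j}$ when $\lambda=(1^{m_1}2^{m_2}\cdots)$. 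Encoding these counts in the symmetric-function-valued series $Z^\sfr(u):=\sum_{d\ge0}u^d\sum_{\lambda\vdash d}N_\lambda^\sfr(q)\,p_\lambda$ (with $p_\lambda$ the power sums) and taking the Euler product over monic irreducibles gives
\[
    Z^\sfr(u)=\prod_{j\ge1}(1+p_ju^j)^{M_j(q)},
\]
where $(1+t)^{M_j(q)}$ denotes the formal binomial series $\sum_m\binom{M_j(q)}{m}t^m$. Applying the Frobenius characteristic $\ch$---an isometry from class functions of $S_d$ (with the inner product in the statement) onto the degree-$d$ part of the ring $\Lambda$ of symmetric functions (with the Hall inner product)---one checks that \eqref{eqn CEF formula}, for all factorization statistics $P$ and all $d$, is equivalent to the single identity
\[
    \prod_{j\ge1}(1+p_ju^j)^{M_j(q)}\;=\;\sum_{d\ge0}u^d\sum_{k=0}^{d-1}(-1)^k q^{d-k}\,\ch(\phi_d^k)
\]
in $\Lambda[[u]]$.

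It then remains to prove this identity, whose right-hand side is the twisted $S_d$-equivariant Poincar\'e series of $\PConf_d(\CC)$. Here I would use the classical description of $H^*(\PConf_d(\CC),\QQ)$ as an $S_d$-representation (Arnold's presentation together with the Orlik--Solomon/Lehrer--Solomon analysis): the $S_d$-equivariant cohomology of $\bigsqcup_d\PConf_d(\CC)$ factors as an Euler product whose $j$-th factor is controlled by the Lie representation $\mathrm{Lie}_j$, and the dimension of $\mathrm{Lie}_j$ on a free Lie algebra of rank $q$ is exactly the necklace polynomial $M_j(q)$; the odd cohomological degree of the Arnold generators $\omega_{ij}$ supplies the sign $(-1)^k$ and turns this factorization into $\prod_j(1+p_ju^j)^{M_j(q)}$. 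Specializing the power sums to scalars recovers the classical factorization $\prod_{i=1}^{d-1}(1+it)$ of the Poincar\'e polynomial of $\PConf_d(\CC)$, so this is the equivariant enhancement of a familiar statement.

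I expect the main obstacle to be this last step: carrying the ``PBW-type'' Euler-product factorization of the cohomology of configuration space through equivariantly while tracking the exterior-algebra signs, so as to land on $Z^\sfr(u)$ exactly rather than merely up to an ambiguous sign twist. By contrast, the arithmetic of $\FF_q[x]$ and the bookkeeping with $\ch$ and the Hall pairing are routine. A more self-contained alternative reuses Theorem~\ref{thm twist intro}: combine the elementary factorization
\[
    \prod_{j\ge1}(1+p_ju^j)^{M_j(q)}=\Bigl(\textstyle\prod_{j\ge1}(1-p_ju^j)^{-M_j(q)}\Bigr)\Bigl(\textstyle\prod_{j\ge1}(1-p_j^2u^{2j})^{M_j(q)}\Bigr),
\]
which relates the squarefree series to the generating function $\prod_j(1-p_ju^j)^{-M_j(q)}$ for all monic polynomials, with the comparison of configuration spaces $H^{2k}(\PConf_d(\RR^3),\QQ)\cong\Sgn^{\otimes k}\otimes H^k(\PConf_d(\CC),\QQ)$---equivalently $\psi_d^k=\Sgn^{\otimes k}\otimes\phi_d^k$---which follows by comparing the Arnold presentations over $\RR^3$ and over $\CC$ (generators in degree $2$ versus degree $1$, and $\omega_{ij}=-\omega_{ji}$ versus $\omega_{ij}=\omega_{ji}$, which together effect precisely the sign twist in cohomological degree $k$). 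On this route the difficulty instead becomes evaluating the correction factor $\prod_j(1-p_j^2u^{2j})^{M_j(q)}$, i.e.\ understanding the effect of $p_j\mapsto p_j^2$ on the characters $\psi_d^k$.
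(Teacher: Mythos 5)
Your first route reduces the theorem, exactly as the paper does, to the single symmetric-function identity
\[
\prod_{j\ge 1}\bigl(1+p_ju^j\bigr)^{M_j(q)}\;=\;\sum_{d\ge 0}u^d\sum_{k=0}^{d-1}(-1)^k q^{d-k}\,\ch(\phi_d^k),
\]
via the count $\prod_j\binom{M_j(q)}{m_j}$ of squarefree polynomials with factorization type $\lambda$ and the isometry property of $\ch$; that part is correct and coincides with the paper's argument (the paper writes $a_j$ for $p_j$ and the cycle indicator for $\ch$). The gap is that this identity carries the entire content of the theorem, and you do not prove it: the paper's proof consists precisely of quoting it from \cite{Hersh Reiner} (Thm.~2.17, in the form $\prod_j(1+(-1)^ja_jt^j)^{M_j(-q)}$, which becomes the above after $t\mapsto -t$, $q\mapsto -q$), with the identity going back to \cite{Calderbank Hanlon Robinson}. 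Your sketch --- that the $S_d$-equivariant cohomology of configurations in $\CC$ ``factors as an Euler product whose $j$-th factor is controlled by $\lie(j)$ with exponent $M_j(q)$'' --- is essentially a restatement of the identity to be proved rather than a derivation from Arnold's presentation; making it precise is exactly the Calderbank--Hanlon--Robinson/Lehrer--Solomon computation, and you yourself flag it as the main obstacle. As written, the crucial step is asserted, not established; a citation at that point would complete the proof and make it essentially identical to the paper's.

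The fallback route has a concrete error: the claimed isomorphism $\psi_d^k\cong\Sgn^{\otimes k}\otimes\phi_d^k$ is false for $d\ge 4$. Take $d=4$, $k=1$: $\phi_4^1$ is the permutation character on $2$-subsets spanned by the Arnold classes $\omega_{ij}$, with values $(6,2,2,0,0)$ on the classes $[1^4],[2\,1^2],[2^2],[3\,1],[4]$, so $\sgn\otimes\phi_4^1=(6,-2,2,0,0)$; but $\psi_4^1$ is the character of $\lie_{(2,1,1)}=\mathrm{Ind}_{C_2\times S_2}^{S_4}(\zeta_2\boxtimes\tr)$, with values $(6,0,-2,0,0)$. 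Equivalently, $H^1(\PConf_4(\CC),\QQ)$ contains the trivial representation (the invariant class $\sum_{i<j}\omega_{ij}$), while by \eqref{eqn trivial} no $H^{2k}(\PConf_4(\RR^3),\QQ)$ with $k>0$ does. The true comparison between the $\RR^2$ and $\RR^3$ cases twists each induced summand $\mathrm{Ind}_{Z_\lambda}^{S_d}(\cdots)$ with $\rank(\lambda)=k$ by a sign character depending on $\lambda$ (through the wreath-product structure of $Z_\lambda$), not by a single global factor $\Sgn^{\otimes k}$, so the proposed reduction of the squarefree series to the $\poly_d$ series times the correction factor $\prod_j(1-p_j^2u^{2j})^{M_j(q)}$ does not go through in the form you describe.
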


The first moment formula \eqref{eqn CEF formula} is derived from the Grothendieck-Lefschetz trace formula for \'{e}tale cohomology with ``twisted coefficients'' from which the name is borrowed. The author and Lagarias \cite{Hyde Lagarias} use Theorem \ref{thm CEF twist} to establish a representation theoretic interpretation of the \emph{squarefree splitting measure} $\nu^\sfr$, where $\nu^\sfr(\lambda)$ is the probability of a random squarefree polynomial having factorization type $\lambda$.

\begin{thm}[{\cite[Thm. 1.2]{Hyde Lagarias}}]
\label{thm sf rep interp}
Let $\chi_d^k$ be the character of the $S_d$-representation $H^k(\PConf_d(\CC)/\CC^\times,\QQ)$ (see Section \ref{sec config}.) Then for all $d\geq 2$ and partitions $\lambda\vdash d$ we have
\[
    \nu^\sfr(\lambda) = \frac{1}{z_\lambda}\sum_{k=0}^{d-2}\frac{(-1)^k \chi_d^k(\lambda)}{q^k},
\]
where $z_\lambda = \prod_{j\geq 1} j^{m_j} m_j!$ when $\lambda = (1^{m_1}2^{m_2}\cdots)$, and $\chi_d^k(\lambda)$ is the value of the character $\chi_d^k$ on any element of the symmetric group $S_d$ with cycle type $\lambda$.
\end{thm}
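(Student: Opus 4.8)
The plan is to derive Theorem~\ref{thm sf rep interp} directly from the twisted Grothendieck--Lefschetz formula of Theorem~\ref{thm CEF twist}, using it as the arithmetic engine and then converting the cohomology of $\PConf_d(\CC)$ that appears there into the cohomology of $\PConf_d(\CC)/\CC^\times$ that appears in the statement. The first step is bookkeeping: take the factorization statistic $P = \delta_\lambda$ to be the indicator of cycle type $\lambda$, so that $\sum_{f\in\poly_d^\sfr(\FF_q)}\delta_\lambda(f)$ counts the squarefree monic degree $d$ polynomials of factorization type $\lambda$, and dividing by $\#\poly_d^\sfr(\FF_q) = q^d - q^{d-1}$ (valid for $d\geq 2$) gives $\nu^\sfr(\lambda)$. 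Since there are $d!/z_\lambda$ permutations of cycle type $\lambda$, one has $\langle \delta_\lambda,\phi_d^k\rangle = \phi_d^k(\lambda)/z_\lambda$, and Theorem~\ref{thm CEF twist} becomes
\[
    \nu^\sfr(\lambda) = \frac{1}{q^d - q^{d-1}}\cdot\frac{1}{z_\lambda}\sum_{k=0}^{d-1}(-1)^k\phi_d^k(\lambda)\,q^{d-k}.
\]

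The geometric heart of the argument is the character identity $\phi_d^k = \chi_d^k + \chi_d^{k-1}$ relating $H^k(\PConf_d(\CC))$ to $H^k(\PConf_d(\CC)/\CC^\times)$, with the conventions $\chi_d^{-1} = 0$ and $\chi_d^j = 0$ for $j\geq d-1$. For $d\geq 2$ the scaling action of $\CC^\times$ on $\PConf_d(\CC)$ is free, so $\PConf_d(\CC)\to\PConf_d(\CC)/\CC^\times$ is a principal $\CC^\times\simeq S^1$ bundle; the Arnold classes $\omega_{ij} = \tfrac{1}{2\pi i}\,d\log(z_i - z_j)$ span $H^1(\PConf_d(\CC))$, and their $S_d$-invariant sum $\sum_{i<j}\omega_{ij}$ restricts on each fibre to $\binom{d}{2}$ times the generator of $H^1(S^1)$. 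By Leray--Hirsch this yields an $S_d$-equivariant isomorphism $H^*(\PConf_d(\CC))\cong H^*(\PConf_d(\CC)/\CC^\times)\otimes H^*(S^1)$ in which $S_d$ acts trivially on the $H^*(S^1)$ factor, which is exactly the claimed identity; the vanishing range for $\chi_d^j$ follows since $P_{\PConf_d(\CC)}(t) = \prod_{i=1}^{d-1}(1+it) = (1+t)\,P_{\PConf_d(\CC)/\CC^\times}(t)$. I expect this to be the main obstacle: one must produce the invariant transgression class and verify that the associated Serre spectral sequence degenerates compatibly with the $S_d$-action. Everything downstream is formal.

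Granting this, substitute $\phi_d^k(\lambda) = \chi_d^k(\lambda) + \chi_d^{k-1}(\lambda)$ above and reindex the contribution of $\chi_d^{k-1}$ by $k\mapsto k+1$; the vanishing of $\chi_d^{d-1}$ and of $\chi_d^{-1}$ lets the two pieces be merged over $0\leq k\leq d-2$:
\[
    \sum_{k=0}^{d-1}(-1)^k\phi_d^k(\lambda)\,q^{d-k} = \sum_{k=0}^{d-2}(-1)^k\chi_d^k(\lambda)\bigl(q^{d-k} - q^{d-1-k}\bigr) = q^{d-1}(q-1)\sum_{k=0}^{d-2}(-1)^k\frac{\chi_d^k(\lambda)}{q^k}.
\]
Since $q^d - q^{d-1} = q^{d-1}(q-1)$, this prefactor cancels exactly and gives $\nu^\sfr(\lambda) = \tfrac{1}{z_\lambda}\sum_{k=0}^{d-2}(-1)^k\chi_d^k(\lambda)/q^k$, as claimed. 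One could alternatively bypass the topology and match the closed product formula $\nu^\sfr(\lambda) = (q^d - q^{d-1})^{-1}\prod_j\binom{M_j(q)}{m_j}$ against the generating function of the $\chi_d^k$, but routing through Theorem~\ref{thm CEF twist} keeps the representation theory in the foreground and is the shortest route given what is already available.
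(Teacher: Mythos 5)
Your proposal is correct as a free-standing argument, but it is not the paper's route: it is essentially the original Hyde--Lagarias derivation (the one cited before Theorem \ref{thm rep interp}), which takes Church--Ellenberg--Farb's formula (Theorem \ref{thm CEF twist}, proved via \'{e}tale cohomology and the Grothendieck--Lefschetz trace formula) as the arithmetic input, specializes to the indicator statistic $\delta_\lambda$, and converts $\phi_d^k$ into $\chi_d^k$ via the $S_d$-equivariant splitting $\phi_d^k = \chi_d^k + \chi_d^{k-1}$ coming from the principal $\CC^\times$-bundle $\PConf_d(\CC)\to\PConf_d(\CC)/\CC^\times$; your Leray--Hirsch argument with the invariant Arnold class and the telescoping/reindexing are both fine, and the latter is literally the computation \eqref{eqn transition}. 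The paper instead proves the statement with no algebraic geometry at all: it starts from the Hersh--Reiner (Calderbank--Hanlon--Robinson) cycle-index identity expressing $\sum_{d,\lambda} z_\lambda^{-1}\sum_k (-1)^k\phi_d^k(\lambda)q^{d-k}a^\lambda t^d$ as the product $\prod_j(1+a_jt^j)^{M_j(q)}$, expands that product by the binomial theorem to recognize the counts $\binom{\AA^1}{\lambda}$ of squarefree polynomials of each factorization type, substitutes $t=1/q$, and only then applies the same $\phi_d^k=\chi_d^k+\chi_d^{k-1}$ splitting. What the paper's route buys is exactly what your route spends: Theorem \ref{thm CEF twist} becomes a corollary (Theorem \ref{thm sf twisted gl}) rather than a hypothesis, and the whole argument stays at the level of generating functions. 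One caution if you wanted to splice your argument into this paper: internally, Theorem \ref{thm CEF twist} is here \emph{deduced} from the statement you are proving, so your proof must explicitly invoke the independent CEF proof of their Proposition 4.1 to avoid circularity; citing it as an external theorem, as you implicitly do, is logically sound but defeats the paper's stated purpose of removing that dependence.
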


We give a new proof of Theorem \ref{thm sf rep interp} using generating functions and derive Theorem \ref{thm CEF twist} as a consequence. Our proofs of Theorem \ref{thm twist intro} and Theorem \ref{thm CEF twist} do not use algebraic geometry or the Grothendieck-Lefschetz trace formula, but we keep the name ``twisted Grothendieck-Lefschetz formula'' to emphasize the parallel our Theorem \ref{thm twist intro} and the result of Church, Ellenberg, and Farb.

The use of generating functions in the study of factorization statistics is not new. Church, Ellenberg, and Farb \cite{CEF} use $L$-functions to compute the stable limits of expected values of squarefree factorization statistics. Fulman \cite{Fulman} uses cycle index series to derive the asymptotic formulas for first moments of squarefree factorization statistics given in \cite{CEF} without using representation theory or cohomology. Chen \cite{Chen 1, Chen 2} further develops these methods in the more general setting of an arbitrary affine or projective variety $V$ defined over $\FF_q$. Our main innovations are the application of generating functions to factorization statistics on the set of \emph{all} polynomials (instead of the subset of squarefree polynomials) and the derivation of the twisted Grothendieck-Lefschetz formulas by generating function methods.

There have been other generalizations of Theorem \ref{thm CEF twist} from squarefree polynomials to all polynomials. Gadish \cite[Sec. 1.3]{Gadish} and Hast, Matei \cite{Hast Matei} both study expected values of functions defined on the set of all polynomials; their functions depend on both the degree of the irreducible factors and their multiplicities. We call these \emph{weighed factorization statistics}. Gadish \cite[Cor. 1.4]{Gadish} shows that the expected value of a weighted factorization statistic $P$ on $\poly_d(\FF_q)$ matches the expected value of $P$ on $S_d$ viewed as a class function. Stated geometrically, the expected values of weighted factorization statistics on degree $d$ polynomials correspond to the cohomology of $\RR^d$ as an $S_d$-representation, while the expected values of our factorization statistics correspond to the cohomology of $\PConf_d(\RR^3)$ as an $S_d$-representation.

\subsection{Further questions}
Church, Ellenberg, and Farb's \'{e}tale cohomology approach to Theorem \ref{thm CEF twist} shows a clear geometric connection between squarefree factorization statistics and the cohomology of ordered configurations in $\CC$. To summarize, we start with a map of schemes
\begin{equation}
\label{eqn map}
    \PConf_d(\AA^1) \longrightarrow \Conf_d(\AA^1),
\end{equation}
which sends an ordered configuration of $d$ points to its unordered counterpart. The symmetric group $S_d$ acts on $\PConf_d(\AA^1)$ by permuting points in the ordered configuration, and the map in \eqref{eqn map} is the quotient by this action. The $\FF_q$-points of $\Conf_d(\AA^1)$ are in natural correspondence with squarefree polynomials of degree $d$, and the $\CC$-points of $\PConf_d(\AA^1)$ give us the manifold $\PConf_d(\CC)$. The Grothendieck-Lefschetz trace formula connects point counts over finite fields with the \'{e}tale cohomology of the scheme and general comparison theorems between cohomology theories relate this to the singular cohomology of the manifold $\PConf_d(\CC)$.

The map \eqref{eqn map} is unramified, simplifying the application of the Grothendieck-Lefschetz trace formula. The corresponding map of schemes in the case of all polynomials is
\[
    (\AA^1)^d \longrightarrow \Sym_d(\AA^1),
\]
which is highly ramified. Gadish \cite{Gadish} adapts the \'{e}tale cohomological approach to handle ramified covers. This geometrically natural extension leads Gadish to a twisted Grothendieck-Lefschetz formula for weighted factorization statistics \cite[Thm. A (1.2)]{Gadish}.

Our factorization statistics extend those on $\sfpoly_d(\FF_q)$ in a way that is combinatorially natural but is at odds with the apparatus of algebraic geometry. This results in a surprising connection to the cohomology of ordered configurations in $\RR^3$ for which we have no geometric intuition.

\begin{question}
Is there a geometric explanation for the connection between factorization statistics on $\poly_d(\FF_q)$ and the cohomology of $\PConf_d(\RR^3)$?
\end{question}

Church, Ellenberg, and Farb deduce their twisted Grothendieck-Lefschetz formula from a more general result relating factorization statistics on quotients of complements of hyperplane arrangements to the \'{e}tale cohomology of said complements. Note that $\PConf_d(\CC)$ may be interpreted as the complement of the \emph{braid arrangement}, consisting of the hyperplanes $z_i = z_j$ for all $i\neq j$. Given a collection of linear forms $L$ defined over $\ZZ$ in $d$ variables which is stable under the natural action of $S_d$, let $A_d(L)$ be the complement of the hyperplane arrangement determined by the vanishing sets of the linear forms. Let $B_d(L)$ denote the scheme-theoretic quotient of $A_d(L)$ by the action of $S_d$.

\begin{thm}[{\cite[Thm. 3.7]{CEF}}]
\label{thm CEF general}
Let $P$ be a factorization statistic. If $\ell$ is a prime coprime to $q$ and $\tau_d^k$ is the character of $H_{\text{\'{e}t}}^k(A_d(L), \QQ_{\ell})$, then
\[
    \sum_{f \in B(L)_d(\FF_q)} P(f) = \sum_{k=0}^d (-1)^k \langle P, \tau_d^k\rangle q^{d-k}.
\]
\end{thm}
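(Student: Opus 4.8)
This is the twisted Grothendieck--Lefschetz formula for arrangement complements, and I would prove it by the standard route: feed a well-chosen $\ell$-adic local system into the Lefschetz trace formula. First I would package the class function $P$ into a (virtual) $S_d$-representation $V_P$ over $\QQ_\ell$ with character $P$. Since the defining forms are $S_d$-stable and (in the setup of \cite{CEF}) $S_d$ acts freely on $A_d(L)$, the quotient map $\pi\colon A_d(L)\to B_d(L)$ is a finite \'etale $S_d$-cover, so one may form the local system $\mathcal V_P:=(\pi_*\QQ_\ell\otimes_{\QQ_\ell} V_P)^{S_d}$ on $B_d(L)$, whose monodromy representation is $V_P$.

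Next comes the local computation. For $\bar f\in B_d(L)(\FF_q)$ choose a geometric point of $A_d(L)$ above it; geometric Frobenius permutes the fiber $\pi^{-1}(\bar f)$, and following it by the unique element of $S_d$ returning the chosen point to itself gives a well-defined conjugacy class $[\sigma_{\bar f}]\subset S_d$. Unwinding the construction of $\mathcal V_P$ shows $\mathrm{tr}\big(\mathrm{Frob}_q\mid(\mathcal V_P)_{\bar f}\big)=P(\sigma_{\bar f})$ (real-valuedness of $S_d$-characters removes the geometric-versus-arithmetic Frobenius ambiguity), and the point of calling $P$ a \emph{factorization statistic} is precisely that $P(\sigma_{\bar f})$ equals the value $P(f)$ attached to $\bar f$ through its ``factorization type.'' Summing over $\bar f$ and invoking the Grothendieck--Lefschetz trace formula for constructible $\QQ_\ell$-sheaves on $B_d(L)$ yields
\[
\sum_{f\in B_d(L)(\FF_q)}P(f)=\sum_i(-1)^i\,\mathrm{tr}\Big(\mathrm{Frob}_q\ \Big|\ \big(H^i_c(A_d(L)_{\overline{\FF}_q},\QQ_\ell)\otimes V_P\big)^{S_d}\Big),
\]
where I use that $|S_d|$ is invertible in $\QQ_\ell$, so that $H^i_c(B_d(L),\mathcal V_P)=\big(H^i_c(A_d(L),\QQ_\ell)\otimes V_P\big)^{S_d}$.

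It then remains to unwind the right-hand side using the geometry of $A_d(L)$. As a smooth affine $\ZZ$-variety of dimension $d$, it satisfies $S_d$-equivariant Poincar\'e duality $H^i_c(A_d(L),\QQ_\ell)\cong H^{2d-i}(A_d(L),\QQ_\ell)^\vee(-d)$, with $H^i_c$ concentrated in degrees $d\le i\le 2d$. The crucial input is that the cohomology of a rational hyperplane-arrangement complement is of Tate type: by the Brieskorn/Orlik--Solomon description it is generated as a ring in degree one by the hyperplane classes, each a copy of $\QQ_\ell(-1)$, so geometric Frobenius acts on $H^k(A_d(L),\QQ_\ell)$ by the scalar $q^k$ while commuting with the $S_d$-action. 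Hence Frobenius acts on $H^i_c(A_d(L),\QQ_\ell)$ by $q^{i-d}$, and
\[
\mathrm{tr}\big(\mathrm{Frob}_q\mid(H^i_c(A_d(L))\otimes V_P)^{S_d}\big)=q^{i-d}\dim\big(H^{2d-i}(A_d(L))^\vee\otimes V_P\big)^{S_d}=q^{i-d}\langle P,\tau_d^{\,2d-i}\rangle,
\]
using $\dim(U^\vee\otimes V)^{S_d}=\langle\chi_U,\chi_V\rangle$ and that the $S_d$-character of $H^{2d-i}(A_d(L),\QQ_\ell)$ is $\tau_d^{\,2d-i}$. Re-indexing by $k=2d-i$ (so $0\le k\le d$ and $(-1)^i=(-1)^k$) collapses the alternating sum into $\sum_{k=0}^d(-1)^k\langle P,\tau_d^k\rangle q^{d-k}$, which is the assertion.

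The hard part is the Tate-type statement: that each $H^k(A_d(L),\QQ_\ell)$ is a direct sum of copies of $\QQ_\ell(-k)$ on which the $S_d$-action commutes with Frobenius. This is the single geometric fact that forces every term to carry exactly the power $q^{d-k}$ rather than an uncontrolled mixture of Frobenius eigenvalues, and it is what lets the right-hand side reorganize into integer inner products against the $\tau_d^k$. By comparison the remaining ingredients -- exactness of $S_d$-invariants, equivariance of Poincar\'e duality, the stalkwise identification of the Frobenius trace with $P(f)$, and the final re-indexing -- are bookkeeping, although the stalk identification does quietly rely on the full dictionary between $\FF_q$-points of the quotient $B_d(L)$ and Frobenius conjugacy classes in $S_d$.
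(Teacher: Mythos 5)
Your outline is essentially correct, but be aware that it cannot be compared with ``the paper's proof'' because this paper does not prove Theorem \ref{thm CEF general} at all: the statement is quoted from Church--Ellenberg--Farb, and the paper's own generating-function technique is only shown to recover the braid-arrangement special case (Theorems \ref{thm rep interp}, \ref{thm twisted gl}, \ref{thm sf twisted gl}); indeed the author explicitly poses as an open question whether those methods can be adapted to prove the general arrangement statement. What you have written is, in effect, a reconstruction of the original \'etale-cohomological argument of \cite{CEF}: form the lisse sheaf $\mathcal{V}_P=(\pi_*\QQ_\ell\otimes V_P)^{S_d}$, identify the stalkwise Frobenius trace with $P(\sigma_{\bar f})$, apply the Grothendieck--Lefschetz formula for compactly supported cohomology, pass through the transfer isomorphism $H^i_c(B_d(L),\mathcal{V}_P)\cong\bigl(H^i_c(A_d(L),\QQ_\ell)\otimes V_P\bigr)^{S_d}$ and equivariant Poincar\'e duality, and invoke the purity/Tate-type input (Lehrer's theorem, via Brieskorn--Orlik--Solomon) that geometric Frobenius acts on $H^k(A_d(L),\QQ_\ell)$ by $q^k$; your sign and exponent bookkeeping under $k=2d-i$ is right, and you correctly flag purity as the step that forces the clean $q^{d-k}$ weights. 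Two hypotheses you hedge should be stated outright if this were written up: the torsor description of the fiber (hence the well-defined class $[\sigma_{\bar f}]$ and the stalk computation) needs the $S_d$-action on $A_d(L)$ to be free, and the Tate-type claim needs the arrangement to have good reduction at $p$ so that the Orlik--Solomon description persists over $\overline{\FF}_q$; both are part of the setup in \cite{CEF} but are invisible in the statement as quoted here. The trade-off between the two routes is exactly the one the paper highlights: the \'etale approach you follow is geometric and works for any $S_d$-stable arrangement, while the paper's cycle-index/generating-function approach avoids algebraic geometry entirely but currently requires an explicit product formula for the relevant characters, which is known for $\PConf_d$ but not for general $A_d(L)$.
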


\noindent Given that our generating function method provides a new proof of the special case Theorem \ref{thm CEF twist}, we ask:

\begin{question}
Can our generating function methods be adapted to give a new proof of Theorem \ref{thm CEF general}?
\end{question}

\noindent The key is to find explicit product formulas for the cycle index series of the family of representations given by the \'{e}tale cohomology analogous to those which we use in the proof of Theorem \ref{thm rep interp}. Such formulas may be known, but not to us.

Finally we pose the broad question of how to adapt these results on factorization statistics to the integers $\ZZ$. It is not clear what the correct analog should be and what form the results should take. 


\subsection*{Acknowledgements}
We would like to thank Jeff Lagarias for asking the question whether there may be interesting structure in the splitting measure of $\poly_d(\FF_q)$.

We also thank Weiyan Chen, Will Sawin, Phil Tosteson, and Michael Zieve for helpful conversations, references, and feedback.


\section{Twisted Grothendieck-Lefschetz formulas}
\label{section twisted}

Let $q$ be a prime power and $d\geq 1$ an integer. Let $\poly_d(\FF_q)$ be the set of monic degree $d$ polynomials in $\FF_q[x]$. The subset of squarefree polynomials is denoted $\sfpoly_d(\FF_q) \subseteq \poly_d(\FF_q)$. Every polynomial $f \in \poly_d(\FF_q)$ has a unique factorization into irreducible polynomials over $\FF_q$. The degrees of the irreducible factors of $f$ form a partition $[f]$ of the degree $d$ which we call the \emph{factorization type} of $f$. Recall that the number of degree $j$ irreducible polynomials in $\FF_q[x]$ is given by the \emph{necklace polynomial}
\[
    M_j(q) := \frac{1}{j} \sum_{i \mid j} \mu(i)q^{j/i}.
\]
If $\lambda$ is a partition, then $m_j = m_j(\lambda)$ is the number of size $j$ parts of $\lambda$. In other words, $\lambda = (1^{m_1} 2^{m_2} 3^{m_3}\cdots)$. Given a partition $\lambda \vdash d$ we define $\multi{\AA^1}{\lambda}$ and $\binom{\AA^1}{\lambda}$ to be the number of polynomials in $\poly_d(\FF_q)$ and $\sfpoly_d(\FF_q)$ respectively with factorization type $\lambda$. The $\AA^1$ in the notation reflects the correspondence between degree $d$ monic polynomials in $\FF_q[x]$ and points in $\Sym_d(\AA^1)(\FF_q)$. By unique factorization we have the formulas,
\begin{equation}
\label{eqn explicit count}
    \multi{\AA^1}{\lambda} := \prod_{j\geq 1}\multi{M_j(q)}{m_j} \hspace{3em}
    \binom{\AA^1}{\lambda} := \prod_{j\geq 1}\binom{M_j(q)}{m_j},
\end{equation}
where 
\[
    \multi{x}{m} := \frac{x(x + 1)(x + 2)\cdots (x + m - 1)}{m!} = \binom{x + m - 1}{m}.
\]
Note that $\multi{x}{m}$ counts the number of subsets of size $m$ chosen from an $x$ element set with repetition. The product expressions \eqref{eqn explicit count} show that $\multi{\AA^1}{\lambda}$ and $\binom{\AA^1}{\lambda}$ are polynomials in $q$ of degree $d$.

The total number of monic degree $d$ polynomials over $\FF_q$ is $|\poly_d(\FF_q)| = q^d$, while the total number of squarefree polynomials is $|\sfpoly_d(\FF_q)| = q^d - q^{d-1}$ for $d\geq 2$ (see \cite[Prop. 2.3]{Rosen}.) We define the \emph{splitting measure} $\nu(\lambda)$ to be the probability of an element $f \in \poly_d(\FF_q)$ having factorization type $\lambda$, and similarly define the \emph{squarefree splitting measure} $\nu^{\sfr}(\lambda)$ for $f \in \sfpoly_d(\FF_q)$. More explicitly,
\[
    \nu(\lambda) := \frac{1}{|\poly_d(\FF_q)|}\multi{\AA^1}{\lambda} \hspace{3em}
    \nu^\sfr(\lambda) := \frac{1}{|\poly_d^\sfr(\FF_q)|}\binom{\AA^1}{\lambda}.
\]

Both splitting measures are rational functions in $q$ for each partition $\lambda$, and furthermore both are polynomials in $1/q$ (this is clear for $\nu(\lambda)$ and is shown for $\nu^\sfr(\lambda)$ in \cite[Prop. 2.4]{Hyde Lagarias}.) Recall that the partitions $\lambda \vdash d$ parametrize the conjugacy classes of the symmetric group $S_d$. Thus the splitting measures may be viewed as polynomial-valued class functions on $S_d$. Our first result gives an interpretation of the coefficients of the splitting measures in terms of the representation theory of the symmetric group.

Let us review some terminology and notation. If $\chi$ is a character of the symmetric group $S_d$ and $\lambda$ is a partition of $d$, we write $\chi(\lambda)$ for the value of $\chi$ on any element $\sigma \in S_d$ of cycle type $\lambda$. This is well-defined since characters are constant on conjugacy classes. Let $z_\lambda$ be the number of permutations in $S_d$ commuting with an element $\sigma \in S_d$ of cycle type $\lambda$, then
\[
    z_\lambda := \prod_{j\geq 1}j^{m_j}m_j!.
\]
The \emph{rank} of a partition $\lambda \vdash d$ is $\rank(\lambda) := \sum_{j\geq 1} m_j - 1 = d - \ell(\lambda)$, where $\ell(\lambda)$ is the number of parts in $\lambda$.

We now introduce the $S_d$-representations whose characters are shown in Theorem \ref{thm rep interp} to determine the coefficients of the splitting measures.

\subsection{Higher Lie representations} Given a positive integer $j$, let $\zeta_j$ be a faithful one-dimensional complex representation of the cyclic group $C_j$. Viewing $C_j$ as a subgroup of the symmetric group $S_j$ generated by a $j$-cycle, the \emph{$j$th Lie representation} $\lie(j)$ is defined as
\[
    \lie(j) := \mathrm{Ind}_{C_j}^{S_j} \zeta_j.
\]
For a partition $\lambda \vdash d$, the \emph{higher Lie representation} $\lie_\lambda$ is defined as
\[
    \lie_\lambda := \mathrm{Ind}_{Z_\lambda}^{S_d} \bigotimes_{j\geq 1}\lie(j)^{\otimes m_j(\lambda)},
\]
where $Z_\lambda$ is the centralizer of a permutation with cycle type $\lambda$. Finally, for $0\leq k \leq d$ let $\lie_d^k$ be the $S_d$-representation
\[
    \lie_d^k := \bigoplus_{\rank(\lambda) = k} \lie_\lambda.
\]

\subsection{Configuration space}
\label{sec config}
Given a topological space $X$, let $\PConf_d(X)$ be the space of ordered configurations of $d$ distinct points in $X$,
\[
    \PConf_d(X) := \{(x_1,x_2,\ldots,x_d)\in X^d : x_i \neq x_j \text{ when } i\neq j\}.
\]
The symmetric group $S_d$ acts freely on $\PConf_d(X)$ by permuting the coordinates. Thus the singular cohomology $H^k(\PConf_d(X),\QQ)$ is, by functoriality, an $S_d$-representation for all $k\geq 0$. Sundaram and Welker \cite{Sundaram Welker} show for $k\geq 0$ that for every odd $n \geq 3$
\[
    H^{(n-1)k}(\PConf_d(\RR^n),\QQ) \cong \lie_d^k,
\]
as $S_d$-representations (see \cite[Sec. 2.3]{Hersh Reiner} for a discussion of this result in language closer to our presentation.) For the sake of concreteness we specialize to the case $n = 3$,
\[
    H^{2k}(\PConf_d(\RR^3),\QQ) \cong \lie_d^k.
\]

If $X = \CC$, then the unit group $\CC^\times$ acts on $\PConf_d(\CC)$ by simultaneously scaling all coordinates; this action commutes with $S_d$, hence there is a well-defined $S_d$-action on the quotient $\PConf_d(\CC)/\CC^\times$. Thus $H^k(\PConf_d(\CC)/\CC^\times,\QQ)$ is an $S_d$-representation for all $k\geq 0$.

We now state our first result.

\begin{thm}
\label{thm rep interp}
Let $\psi_d^k$ and $\chi_d^k$ be the characters of the $S_d$-representations $\lie_d^k \cong H^{2k}(\PConf_d(\RR^3),\QQ)$ and $H^k(\PConf_d(\CC)/\CC^\times,\QQ)$ respectively.
\begin{enumerate}
    \item For $d\geq 1$ and every partition $\lambda \vdash d$,
        \[
            \nu(\lambda) = \frac{1}{z_\lambda}\sum_{k=0}^{d-1} \frac{\psi_d^k(\lambda)}{q^k}.
        \]
    \item For $d\geq 2$ and every partition $\lambda \vdash d$,
        \[
            \nu^\sfr(\lambda) = \frac{1}{z_\lambda}\sum_{k=0}^{d-2} \frac{(-1)^k \chi_d^k(\lambda)}{q^k}.
        \]
\end{enumerate}
\end{thm}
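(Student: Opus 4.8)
The plan is to prove both parts by manipulating generating functions in the ring of symmetric functions $\Lambda$ (with coefficients in $\QQ[q^{-1}]$), tracking cycle types through power sums $p_\lambda$. Writing $\ch(V)=\sum_{\mu}z_\mu^{-1}\chi_V(\mu)p_\mu$ for the Frobenius characteristic of an $S_d$-representation $V$, and multiplying each claimed identity by $p_\lambda$ and summing over $\lambda\vdash d$, part (1) becomes $\sum_{\lambda\vdash d}\nu(\lambda)p_\lambda=\sum_{k}q^{-k}\ch(\lie_d^k)$ and part (2) becomes $\sum_{\lambda\vdash d}\nu^{\sfr}(\lambda)p_\lambda=\sum_k(-q)^{-k}\ch\big(H^k(\PConf_d(\CC)/\CC^\times)\big)$. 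Summing over all $d$, it then suffices to prove these as identities of formal power series, after which re-extracting the coefficient of $p_\lambda$ recovers the theorem (and the ranges $k\le d-1$, resp. $k\le d-2$, appear automatically because $\lie_d^k$, resp. $H^k(\PConf_d(\CC)/\CC^\times)$, vanishes outside them).

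\textbf{The arithmetic side.} From the product formula $\multi{\AA^1}{\lambda}=\prod_{j\ge1}\multi{M_j(q)}{m_j}$ and the identity $\sum_{m\ge0}\multi{x}{m}t^m=(1-t)^{-x}$ one gets immediately
\[\sum_{d\ge0}\sum_{\lambda\vdash d}\nu(\lambda)\,p_\lambda=\sum_{d\ge0}q^{-d}\sum_{\lambda\vdash d}\prod_{j\ge1}\multi{M_j(q)}{m_j}p_j^{m_j}=\prod_{j\ge1}\Big(1-\tfrac{p_j}{q^j}\Big)^{-M_j(q)}.\]
Likewise, using $\binom{\AA^1}{\lambda}=\prod_j\binom{M_j(q)}{m_j}$ and $\sum_m\binom{x}{m}t^m=(1+t)^x$, one gets $\sum_{d}q^{-d}\sum_{\lambda\vdash d}\binom{\AA^1}{\lambda}p_\lambda=\prod_j(1+p_j/q^j)^{M_j(q)}$; dividing by $|\sfpoly_d(\FF_q)|=q^{d-1}(q-1)$ (with small corrections in degrees $0$ and $1$) writes the left side of the part-(2) identity as $\tfrac{q}{q-1}\big(\prod_j(1+p_j/q^j)^{M_j(q)}-1-p_1\big)$.

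\textbf{The topological side and the matching.} Here I would use the plethystic product structure of the higher Lie representations, $\ch(\lie_\lambda)=\prod_{j\ge1}h_{m_j}\big[\ch\,\lie(j)\big]$ (complete homogeneous symmetric functions composed via plethysm), together with $\rank(\lambda)=\sum_j(j-1)m_j$ and the Sundaram--Welker isomorphism $\lie_d^k\cong H^{2k}(\PConf_d(\RR^3),\QQ)$, to obtain
\[\sum_{d,k\ge0}q^{-k}\ch(\lie_d^k)=\sum_{\lambda}q^{-\rank\lambda}\ch(\lie_\lambda)=\prod_{j\ge1}\Big(\sum_{m\ge0}q^{-(j-1)m}h_m\big[\ch\,\lie(j)\big]\Big).\]
The delicate point is that $q$ must be treated as a \emph{plethystic variable} ($p_n[q]=q^n$), so that $q^{-(j-1)m}h_m[\ch\lie(j)]=h_m[q^{-(j-1)}\ch\lie(j)]$ and each factor is the plethystic exponential of $q^{-(j-1)}\ch\lie(j)$; this is exactly what makes the powers of $q$ line up. Substituting Klyachko's classical formula $\ch\,\lie(j)=\tfrac1j\sum_{i\mid j}\mu(i)p_i^{j/i}$ and writing $j=im$, one finds $\sum_{j}q^{-(j-1)}\ch\lie(j)=-q\sum_{i\ge1}\tfrac{\mu(i)}{i}\log(1-p_i/q^i)$; applying the plethystic exponential, expanding, and collecting terms by a single index $e$ reduces everything to $\prod_e(1-p_e/q^e)^{-M_e(q)}$, using only the definition $M_e(q)=\tfrac1e\sum_{i\mid e}\mu(i)q^{e/i}$. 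This matches the arithmetic side, and reading off the coefficient of $p_\lambda$ gives part (1). Part (2) is parallel: the product formula for $\sum_k(-q)^{-k}\ch(H^k(\PConf_d(\CC)/\CC^\times))$ follows from the $\RR^3$ case via the classical comparison between the $S_d$-equivariant cohomology of $\PConf_d(\RR^2)$ and $\PConf_d(\RR^3)$ (a sign twist, realized on characteristics through the involution $\omega$ together with $q\mapsto -q$) and the Gysin sequence of the free $\CC^\times$-action (an Euler factor $1-q^{-1}$, which also shortens the range to $k\le d-2$); the combinatorial core is again the same necklace-polynomial identity, now appearing as $\prod_j(1+t^j)^{M_j(q)}=\tfrac{1-qt^2}{1-qt}$. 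This reproves Theorem~\ref{thm sf rep interp}, and Theorem~\ref{thm CEF twist} then follows by reindexing.

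\textbf{Expected main obstacle.} The substantive work is establishing (or carefully citing) the product/plethystic description of the $S_d$-equivariant graded cohomology of configuration space --- $\ch(\lie_\lambda)=\prod_j h_{m_j}[\ch\,\lie(j)]$ for $\RR^3$ and its $\CC^\times$-quotient analogue for $\RR^2$ --- and then being scrupulous about the role of $q$ in the plethysm; once those inputs are secured, everything collapses to the elementary M\"obius identity defining the necklace polynomials.
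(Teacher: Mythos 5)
Your overall strategy coincides with the paper's: equate the generating function $\sum_{d}\sum_{\lambda\vdash d}\nu(\lambda)p_\lambda$ (resp.\ its squarefree analogue) with the cycle-index series of the relevant cohomology, using the expansion $\prod_{j}(1-p_j/q^j)^{-M_j(q)}$ coming from $\multi{\AA^1}{\lambda}=\prod_j\multi{M_j(q)}{m_j}$, and compare coefficients of $p_\lambda$. The difference is where the topological-side product formula comes from: the paper simply cites the identity of Hersh--Reiner (their Thm.~2.17, with Hanlon for its derivation), whereas you re-derive it from $\ch(\lie_\lambda)=\prod_j h_{m_j}[\ch\lie(j)]$, the classical character formula for $\lie(j)$, and a plethystic-exponential computation in which $q$ is treated as a plethystic variable. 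That computation is correct: your intermediate identity $\sum_j q^{-(j-1)}\ch\lie(j)=-q\sum_i\tfrac{\mu(i)}{i}\log(1-p_i/q^i)$ and the regrouping over a single index $e$ do collapse to $\prod_e(1-p_e/q^e)^{-M_e(q)}$, so part (1) is sound and somewhat more self-contained than the paper, at the cost of justifying the wreath-product/plethysm formula for $\ch(\lie_\lambda)$ and the care with $p_n[q]=q^n$ that you flag.

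Part (2), however, has a genuine gap. You propose to get the product formula for $\sum_k(-q)^{-k}\ch H^k(\PConf_d(\CC),\QQ)$ from the $\RR^3$ case ``via the involution $\omega$ together with $q\mapsto-q$.'' No substitution of this kind works. On the product side it would force $-M_j(-q)=M_j(q)$ for all $j$, which fails for even $j$ (e.g.\ $-M_2(-q)=-(q^2+q)/2\neq(q^2-q)/2$); the correct target is $\prod_j(1+p_j/q^j)^{M_j(q)}$, whose exponents have the opposite sign, reflecting an exterior-type rather than symmetric-type assembly of the blocks. On the representation side, applying $p_j\mapsto(-1)^{j-1}p_j$ and $q\mapsto-q$ to the whole series would assert $H^k(\PConf_d(\CC),\QQ)\cong H^{2k}(\PConf_d(\RR^3),\QQ)\otimes\Sgn$ for every $k$, which is already false at $k=0$. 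The true classical comparison is a degree-dependent twist (the degree-$2k$ piece is twisted by $\Sgn^{\otimes k}$), and no variable substitution can implement a $k$-dependent twist when $q$ simultaneously tracks $k$ and appears inside $M_j(q)$. So this input must be supplied separately: either cite the known equivariant description of $H^*(\PConf_d(\CC),\QQ)$ (Lehrer--Solomon/Orlik--Solomon, or the second identity in Hersh--Reiner Thm.~2.17, which is exactly what the paper does), or redo your plethystic computation with the exterior-type generating function $\sum_m e_m[\cdot]$ and the appropriately twisted blocks. Your Gysin step for passing to $\PConf_d(\CC)/\CC^\times$ is fine in substance but needs the vanishing of the Euler class (equivalently the splitting $\phi_d^k=\chi_d^k+\chi_d^{k-1}$, which the paper cites from Hyde--Lagarias); the resulting factor $1-q^{-1}$ and the identity $\prod_j(1+t^j)^{M_j(q)}=\tfrac{1-qt^2}{1-qt}$ you mention are correct.
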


This representation theoretic interpretation of the squarefree splitting measure was first shown in \cite[Thm. 5.1]{Hyde Lagarias} using the twisted Grothendieck-Lefschetz formula for squarefree factorization statistics of Church, Ellenberg, and Farb \cite[Prop. 4.1]{CEF}. We prove Theorem \ref{thm rep interp} using generating functions, leading to a new proof of the twisted Grothendieck-Lefschetz formula for squarefree factorization statistics in Theorem \ref{thm sf twisted gl}. The representation theoretic interpretation of the splitting measure $\nu(\lambda)$ appears to be new.

\begin{proof}
(1) For each integer $j\geq 1$ let $a_j$ be a formal variable. If $\lambda = (1^{m_1}2^{m_2}\cdots)$ is a partition, let $a^\lambda := \prod_{j\geq 1}a_j^{m_j}$. Hersh and Reiner \cite[Thm. 2.17]{Hersh Reiner} state the following identity of formal power series
\begin{equation}
\label{eqn first product identity}
    \sum_{d\geq 0}\sum_{\lambda\vdash d} \frac{1}{z_\lambda}\sum_{k=0}^{d-1} \psi_d^k(\lambda) q^{d-k} a^\lambda t^d = \prod_{j\geq 1}\left(\frac{1}{1 - a_j t^j}\right)^{M_j(q)},
\end{equation}
where $M_j(q) = \frac{1}{j}\sum_{i\mid j}\mu(i)q^{j/i}$ is the $j$th necklace polynomial and $\psi_d^k$ is the character of $\lie_d^k$ (see remarks following the proof for a discussion of the equivalence of \ref{eqn first product identity} and \cite[Thm. 2.17]{Hersh Reiner}.) Recall the formal identity,
\[
    \left(\frac{1}{1 - t}\right)^M = \sum_{m\geq 0} \multi{M}{m}t^m.
\]
Expanding the right hand side of \eqref{eqn first product identity} we have
\begin{align}
\begin{split}
    \prod_{j\geq 1}\left(\frac{1}{1 - a_jt^j}\right)^{M_j(q)} &= \prod_{j\geq 1}\sum_{m_j\geq 0}\multi{M_j(q)}{m_j} a_j^{m_j} t^{jm_j}\label{eqn second product identity}\\
    &= \sum_{d\geq 0} \sum_{\lambda \vdash d} \left(\prod_{j\geq 1}\multi{M_j(q)}{m_j(\lambda)}\right) a^\lambda t^d\\
    &= \sum_{d\geq 0} \sum_{\lambda \vdash d} \multi{\AA^1}{\lambda} a^\lambda t^d
\end{split}
\end{align}
Substitute $t = 1/q$ in \eqref{eqn first product identity} and \eqref{eqn second product identity} to find
\begin{align*}
    \sum_{d\geq 0}\sum_{\lambda\vdash d} \frac{1}{z_\lambda}\sum_{k=0}^{d-1} \frac{\psi_d^k(\lambda)}{q^k}a^\lambda &= \prod_{j\geq 1}\left(\frac{1}{1 - a_j/q^j}\right)^{M_j(q)}\\
    &= \sum_{d\geq 0} \sum_{\lambda \vdash d} \frac{1}{q^d}\multi{\AA^1}{\lambda} a^\lambda\\
    &= \sum_{d\geq 0} \sum_{\lambda \vdash d} \nu(\lambda) a^\lambda.
\end{align*}
Comparing coefficients of $a^\lambda$ we conclude that
\[
    \nu(\lambda) = \frac{1}{z_\lambda}\sum_{k=0}^{d-1} \frac{\psi_d^k(\lambda)}{q^k}.
\]

(2) To get the formula for $\nu^\sfr(\lambda)$ we start with another formal power series identity from \cite[Thm. 2.17]{Hersh Reiner}. Let $\phi_d^k$ be the character of the $S_d$-representation $H^k(\PConf_d(\CC),\QQ)$. Then
\begin{equation*}
    \sum_{d\geq 0}\sum_{\lambda\vdash d} \frac{1}{z_\lambda}\sum_{k=0}^{d-1} \phi_d^k(\lambda) q^{d-k} a^\lambda t^d = \prod_{j\geq 1}(1 + (-1)^ja_jt^j)^{M_j(-q)}.
\end{equation*}
The substitution $t \mapsto -t$ and $q \mapsto -q$ simplifies this to
\begin{equation}
\label{eqn sf first product identity}
    \sum_{d\geq 0}\sum_{\lambda\vdash d} \frac{1}{z_\lambda}\sum_{k=0}^{d-1} (-1)^k\phi_d^k(\lambda) q^{d-k} a^\lambda t^d = \prod_{j\geq 1}(1 + a_jt^j)^{M_j(q)}.
\end{equation}
By the binomial theorem, the right hand side of \eqref{eqn sf first product identity} expands as
\begin{align}
\label{eqn sf second product identity}
\begin{split}
    \prod_{j\geq 1}(1 + a_jt^j)^{M_j(q)} &= \prod_{j\geq 1}\sum_{m_j\geq 0}\binom{M_j(q)}{m_j}a_j^{m_j}t^{jm_j}\\
    &= \sum_{d\geq 0}\sum_{\lambda \vdash d}\left(\prod_{j\geq 1}\binom{M_j(q)}{m_j(\lambda)}\right) a^\lambda t^d\\
    &= \sum_{d\geq 0}\sum_{\lambda \vdash d}\binom{\AA^1}{\lambda} a^\lambda t^d.
\end{split}
\end{align}
Substituting $t = 1/q$ in \eqref{eqn sf first product identity} and \eqref{eqn sf second product identity} gives
\begin{align}
\begin{split}
    \sum_{d\geq 0}\sum_{\lambda\vdash d} \frac{1}{z_\lambda}\sum_{k=0}^{d-1} \frac{\phi_d^k(\lambda)}{q^k}a^\lambda &= \prod_{j\geq 1}(1 + a_j/q^j)^{M_j(q)}\label{eqn penultimate identity}\\
    &= \sum_{d\geq 0} \sum_{\lambda \vdash d} \frac{1}{q^d}\binom{\AA^1}{\lambda} a^\lambda.
\end{split}
\end{align}
Let $\chi_d^k$ be the character of the $S_d$-representation $H^k(\PConf_d(\CC)/\CC^\times,\QQ)$. Hyde and Lagarias \cite[Prop. 4.2, Thm. 4.3]{Hyde Lagarias} showed
\[
    H^k(\PConf_d(\CC),\QQ) \cong H^k(\PConf_d(\CC)/\CC^\times,\QQ) \oplus H^{k-1}(\PConf_d(\CC)/\CC^\times,\QQ),
\]
as $S_d$-representations from which it follows that $\phi_d^k = \chi_d^k + \chi_d^{k-1}$; note that $H^{-1}(\PConf_d(\CC)/\CC^\times,\QQ) = H^{d-1}(\PConf_d(\CC)/\CC^\times,\QQ) = 0$. Therefore
\begin{align}
\begin{split}
    \frac{1}{1-\frac{1}{q}}\sum_{k=0}^{d-1} \frac{(-1)^k \phi_d^k(\lambda)}{q^k} &= \frac{1}{1-\frac{1}{q}}\sum_{k=0}^{d- 1} \frac{(-1)^k\big(\chi_d^{k}(\lambda) + \chi_d^{k-1}(\lambda)\big)}{q^k}\label{eqn transition}\\
    &= \frac{1}{1-\frac{1}{q}}\sum_{k=0}^{d-2}\frac{(-1)^k\chi_d^k(\lambda)}{q^k} + \frac{(-1)^{k+1} \chi_d^k(\lambda)}{q^{k+1}}\\
    &= \sum_{k=0}^{d-2}\frac{(-1)^k\chi_d^k(\lambda)}{q^k}.
\end{split}
\end{align}
Multiplying \eqref{eqn penultimate identity} by $\frac{1}{1 - \frac{1}{q}}$ for $d\geq 2$ we have
\begin{align*}
    \sum_{d\geq 2}\sum_{\lambda\vdash d} \frac{1}{z_\lambda}\sum_{k=0}^{d-2} \frac{(-1)^k\chi_d^k(\lambda)}{q^k}a^\lambda &= \sum_{d\geq 2} \sum_{\lambda \vdash d} \frac{1}{q^d-q^{d-1}}\binom{\AA^1}{\lambda} a^\lambda\\
    &= \sum_{d\geq 2} \sum_{\lambda \vdash d} \nu^{\sfr}(\lambda) a^\lambda.
\end{align*}
Finally, comparing coefficients of $a^\lambda$ we conclude that for $d\geq 2$
\[
    \nu^{\sfr}(\lambda) = \frac{1}{z_\lambda}\sum_{k=0}^{d-2}\frac{(-1)^k\chi_d^k(\lambda)}{q^k}.
\]
\end{proof}

The generating functions used in the proof of Theorem \ref{thm rep interp} are stated in terms of symmetric functions in \cite{Hersh Reiner}. To convert between their notation and ours one can replace their power symmetric function $p_j$ with our formal variable $a_j$, and their Frobenius characteristic $\ch(V)$ of an $S_d$-representation $V$ with character $\chi_V$ by its \emph{cycle indicator} 
\[
    \ch(V) \longrightarrow \sum_{\lambda \vdash d} \frac{\chi_V(\lambda)}{z_\lambda} a^\lambda.
\]
Hersh and Reiner cite several sources for the origin of these generating functions. A derivation of the identity for the higher Lie characters may be found in \cite[Thm. 3.7]{Hanlon 1}, although the characters are not called by this name there. The generating function for the cohomology of configurations in $\CC$ is derived in \cite[Cor. 4.4]{Calderbank Hanlon Robinson} with notation similar to ours but stated in a way that does not explicitly connect it with configuration space.

\subsection{Factorization statistics and the twisted Grothendieck-Lefschetz formulas}
\label{subsec twist}
A \emph{factorization statistic} $P$ is a function defined on $\poly_d(\FF_q)$ such that $P(f)$ only depends on the factorization type of $f \in \poly_d(\FF_q)$. Equivalently, $P$ may be viewed as a function defined on the set of partitions of $d$ or as a class function of the symmetric group $S_d$. Any such function $P$ is called a factorization statistic when we want to think of $P$ as a function of polynomials.

\begin{example}
\label{ex 1}
We illustrate with some examples.
\begin{enumerate}[leftmargin=*]
    \item Consider the polynomials $g(x), h(x) \in \poly_5(\FF_3)$ with irreducible factorizations
    \[
        g(x) = x^2(x+1)(x^2 + 1) \hspace{3em} h(x) = (x+1)(x - 1)(x^3 - x + 1).
    \]
    The factorization type of $g(x)$ is the partition $[2,1,1,1] = (1^3\, 2^1)$ and the factorization type of $h(x)$ is $[3,1,1] = (1^2\, 3^1)$. Note that the factorization type does not detect the multiplicity of a specific factor so that $x^2$ and $x(x + 1)$ both have the same factorization type $[1,1]$.
    
    \item Let $R(f)$ be the number of $\FF_q$-roots of $f(x)\in \poly_d(\FF_q)$. Then $R(f)$ depends only on the number of linear factors of $f(x)$, hence is a factorization statistic. Referring to the two polynomials above we have $R(g) = 3$ and $R(h) = 2$.
    
    \item For $k\geq 1$, let $x_k(f)$ be the number of degree $k$ irreducible factors of $f \in \poly_d(\FF_q)$, then $x_k$ is a factorization statistic. As a function on partitions $x_k(\lambda) = m_k(\lambda)$ is the number of parts of $\lambda$ of size $k$. Note that $R = x_1$. The ring $\QQ[x_1, x_2, \ldots ]$ generated by the functions $x_k$ for $k\geq 1$ is called the ring of \emph{character polynomials}. We return to character polynomials in Section \ref{sec asymp stab} when we discuss asymptotic stability.
    
    \item Say a polynomial $f(x)$ has \emph{even type} if the factorization type of $f(x)$ is an even partition. In other words, say $\lambda = (1^{m_1} 2^{m_2} 3^{m_3}\cdots )$ is the factorization type of $f(x)$ and define $\sgn(\lambda)$ by
    \[
        \sgn(\lambda) = \prod_{j\geq 1} (-1)^{m_j(j - 1)},
    \]
    then $f(x)$ has even type if $\sgn(\lambda) = 1$. The indicator function $ET$ defined by
    \[
        ET(f) = \begin{cases} 1 & \text{$f(x)$ has even type}\\ 0 & \text{otherwise,}\end{cases}
    \]
    is a factorization statistic. Thus $ET(g) = 0$ and $ET(h) = 1$.
    
\end{enumerate}
\end{example}

We write $E_d(P)$ for the expected value of a factorization statistic $P$ on $\poly_d(\FF_q)$ and $E_d^{\sfr}(P)$ for the expected value of $P$ on $\sfpoly_d(\FF_q)$. More precisely,
\begin{align*}
    E_d(P) &:= \frac{1}{|\poly_d(\FF_q)|}\sum_{f\in \poly_d(\FF_q)} P(f)\\
    E_d^\sfr(P) &:= \frac{1}{|\poly_d^\sfr(\FF_q)|}\sum_{f\in \poly_d^\sfr(\FF_q)} P(f).
\end{align*}

Our second main result gives an explicit expression for the expected value $E_d(P)$ of a factorization statistic in terms of the ordered configuration space of $d$ distinct points in $\RR^3$. 

If $P$ and $Q$ are class functions on $S_d$, let $\langle P, Q\rangle$ denote their standard $S_d$-invariant inner product
\[
    \langle P, Q \rangle := \frac{1}{d!}\sum_{\sigma\in S_d} P(\sigma)Q(\sigma) = \sum_{\lambda\vdash d}\frac{P(\lambda)Q(\lambda)}{z_\lambda}.
\]

\begin{thm}[Twisted Grothendieck-Lefschetz for $\poly_d(\FF_q)$]
\label{thm twisted gl}
Suppose $P$ is a factorization statistic and $d\geq 1$. If $\psi_d^k$ is the character of the $S_d$-representation $\lie_d^k \cong H^{2k}(\PConf_d(\RR^3),\QQ)$, then
\[
    E_d(P) = \sum_{k=0}^{d-1} \frac{ \langle P, \psi_d^k \rangle}{q^k}.
\]
\end{thm}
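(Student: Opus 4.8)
The plan is to deduce this directly from the representation-theoretic description of the splitting measure in Theorem \ref{thm rep interp}(1); essentially no new idea is needed beyond reorganizing a finite sum. First I would use the defining property of a factorization statistic: since $P(f)$ depends only on the factorization type $[f]$, the sum defining $E_d(P)$ can be grouped by factorization type. Writing $\multi{\AA^1}{\lambda}$ for the number of $f \in \poly_d(\FF_q)$ with $[f] = \lambda$ as in \eqref{eqn explicit count}, and using $|\poly_d(\FF_q)| = q^d$, this gives
\[
    E_d(P) = \frac{1}{q^d}\sum_{\lambda\vdash d} P(\lambda)\multi{\AA^1}{\lambda} = \sum_{\lambda\vdash d} P(\lambda)\,\nu(\lambda).
\]

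Next I would substitute $\nu(\lambda) = \frac{1}{z_\lambda}\sum_{k=0}^{d-1}\frac{\psi_d^k(\lambda)}{q^k}$ from Theorem \ref{thm rep interp}(1) and interchange the two finite sums over $\lambda$ and $k$:
\[
    E_d(P) = \sum_{\lambda\vdash d} P(\lambda)\cdot\frac{1}{z_\lambda}\sum_{k=0}^{d-1}\frac{\psi_d^k(\lambda)}{q^k} = \sum_{k=0}^{d-1}\frac{1}{q^k}\sum_{\lambda\vdash d}\frac{P(\lambda)\,\psi_d^k(\lambda)}{z_\lambda}.
\]
Finally I would recognize the inner sum as the inner product $\langle P, \psi_d^k\rangle = \sum_{\lambda\vdash d}\frac{P(\lambda)\psi_d^k(\lambda)}{z_\lambda}$, using the second form of $\langle\,\cdot\,,\,\cdot\,\rangle$ recorded just before the theorem statement, which yields $E_d(P) = \sum_{k=0}^{d-1}\frac{\langle P, \psi_d^k\rangle}{q^k}$.

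There is no genuine obstacle at this stage: all the work is in Theorem \ref{thm rep interp}(1), whose proof via the Hersh--Reiner product identity \eqref{eqn first product identity} is the substantive input, and every sum appearing here ranges over a finite index set ($\lambda \vdash d$, and $0 \le k \le d-1$), so the interchange of summation is unconditionally valid. If one preferred an argument that does not pass through $\nu(\lambda)$ explicitly, one could instead substitute $t = 1/q$ directly into \eqref{eqn first product identity} and pair both sides against $P$ with respect to $\langle\,\cdot\,,\,\cdot\,\rangle$, but this is the same computation repackaged.
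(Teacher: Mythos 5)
Your proposal is correct and follows essentially the same route as the paper's own proof: rewrite $E_d(P) = \sum_{\lambda\vdash d} P(\lambda)\nu(\lambda)$ using the fact that $P$ depends only on factorization type, substitute the formula for $\nu(\lambda)$ from Theorem \ref{thm rep interp}(1), swap the finite sums, and identify the inner sum with $\langle P, \psi_d^k\rangle$. No gaps; as you note, all of the substance lives in Theorem \ref{thm rep interp}(1).
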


\begin{proof}
Since factorization statistics depend only on the factorization type of a polynomial, we may rewrite the expected value in terms of the splitting measure,
\[
    E_d(P) = \frac{1}{|\poly_d(\FF_q)|}\sum_{f\in \poly_d(\FF_q)} P(f) = \sum_{\lambda \vdash d} P(\lambda)\nu(\lambda).
\]
Then Theorem \ref{thm rep interp} implies,
\begin{align*}
    E_d(P) &= \sum_{\lambda \vdash d} P(\lambda)\nu(\lambda)\\
    &= \sum_{\lambda\vdash d}\frac{1}{z_\lambda}\sum_{k=0}^{d-1} \frac{P(\lambda)\psi_d^k(\lambda)}{q^k}\\
    &= \sum_{k=0}^{d-1} \frac{1}{q^k}\left(\sum_{\lambda \vdash d} \frac{P(\lambda)\psi_d^k(\lambda)}{z_\lambda}\right)\\
    &= \sum_{k=0}^{d-1} \frac{\langle P, \psi_d^k\rangle}{q^k}.
\end{align*}
\end{proof}

Church, Ellenberg, and Farb relate the first moments of factorization statistics on squarefree polynomials to the ordered configuration space of $d$ distinct points in $\CC$ in \cite{CEF}. Let $\phi_d^k$ be the character of $H^k(\PConf_d(\CC),\QQ)$ as a representation of $S_d$. In \cite[Prop. 4.1]{CEF}, Church et al. show 
\begin{equation}
\label{eqn CEF version}
    \sum_{f\in\sfpoly_d(\FF_q)}P(f) = \sum_{k=0}^{d-1}(-1)^k \langle P, \phi_d^k \rangle q^{d-k}.
\end{equation}
Dividing by $|\sfpoly_d(\FF_q)| = q^d - q^{d-1}$ gives the expected value, but also changes the coefficients on the right hand side. The calculation \eqref{eqn transition} in the proof of Theorem \ref{thm rep interp} shows that the identity \eqref{eqn CEF version} is equivalent to Theorem \ref{thm sf twisted gl} below.

We give a new proof of \cite[Prop. 4.1]{CEF} using Theorem \ref{thm rep interp}.

\begin{thm}[Twisted Grothendieck-Lefschetz for $\sfpoly_d(\FF_q)$]
\label{thm sf twisted gl}
Suppose $P$ is a factorization statistic and $d\geq 2$. If $\chi_d^k$ is the character of the $S_d$-representation $H^k(\PConf_d(\CC)/\CC^\times,\QQ)$, then
\[
    E_d^{\sfr}(P) = \sum_{k=0}^{d-2} \frac{(-1)^k \langle P, \chi_d^k \rangle}{q^k}.
\]
\end{thm}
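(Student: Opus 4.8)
The plan is to mirror the proof of Theorem \ref{thm twisted gl} almost verbatim, replacing $\nu$ by $\nu^\sfr$ and invoking part (2) of Theorem \ref{thm rep interp} in place of part (1). First I would observe that a factorization statistic $P$ depends only on the factorization type, so the expected value over squarefree polynomials can be rewritten as a sum over partitions weighted by the squarefree splitting measure:
\[
    E_d^{\sfr}(P) = \frac{1}{|\sfpoly_d(\FF_q)|}\sum_{f\in\sfpoly_d(\FF_q)} P(f) = \sum_{\lambda\vdash d} P(\lambda)\,\nu^\sfr(\lambda).
\]
This is the exact analogue of the first display in the proof of Theorem \ref{thm twisted gl}.

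Next I would substitute the formula for $\nu^\sfr(\lambda)$ supplied by Theorem \ref{thm rep interp}(2), namely $\nu^\sfr(\lambda) = \frac{1}{z_\lambda}\sum_{k=0}^{d-2}\frac{(-1)^k\chi_d^k(\lambda)}{q^k}$, and then interchange the (finite) order of summation over $\lambda$ and over $k$:
\[
    E_d^{\sfr}(P) = \sum_{\lambda\vdash d}\frac{1}{z_\lambda}\sum_{k=0}^{d-2}\frac{(-1)^k P(\lambda)\chi_d^k(\lambda)}{q^k} = \sum_{k=0}^{d-2}\frac{(-1)^k}{q^k}\left(\sum_{\lambda\vdash d}\frac{P(\lambda)\chi_d^k(\lambda)}{z_\lambda}\right).
\]
Finally I would recognize the inner sum as the standard inner product of class functions, using the identity $\langle P,\chi_d^k\rangle = \sum_{\lambda\vdash d}\frac{P(\lambda)\chi_d^k(\lambda)}{z_\lambda}$ recorded just before the statement of the theorem, which yields $E_d^{\sfr}(P) = \sum_{k=0}^{d-2}\frac{(-1)^k\langle P,\chi_d^k\rangle}{q^k}$, as desired.

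There is essentially no obstacle here: the entire argument is a one-line bookkeeping deduction from Theorem \ref{thm rep interp}(2), exactly parallel to how Theorem \ref{thm twisted gl} follows from Theorem \ref{thm rep interp}(1). The only points requiring the hypothesis $d\geq 2$ are that $\sfpoly_d(\FF_q)$ has the stated cardinality $q^d-q^{d-1}\neq 0$ (so the expected value is well-defined) and that Theorem \ref{thm rep interp}(2) itself is stated for $d\geq 2$; both are already in hand. The equivalence with the Church--Ellenberg--Farb formula \eqref{eqn CEF version} is the content of the computation \eqref{eqn transition} already carried out, so no further work is needed to claim the new proof of \cite[Prop. 4.1]{CEF}.
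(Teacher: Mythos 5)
Your proposal is correct and is essentially identical to the paper's own proof: rewrite $E_d^{\sfr}(P)$ as $\sum_{\lambda\vdash d}P(\lambda)\nu^\sfr(\lambda)$, substitute Theorem \ref{thm rep interp}(2), swap the finite sums, and recognize the inner product $\langle P,\chi_d^k\rangle$. No gaps.
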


\begin{proof}
The proof is parallel to that of Theorem \ref{thm twisted gl}. We begin with
\[
    E_d^\sfr(P) = \frac{1}{|\sfpoly_d(\FF_q)|}\sum_{f\in \sfpoly_d(\FF_q)} P(f) = \sum_{\lambda \vdash d} P(\lambda) \nu^\sfr(\lambda),
\]
and use Theorem \ref{thm rep interp} to conclude
\begin{align*}
    E_d^\sfr(P) &= \sum_{\lambda \vdash d} P(\lambda) \nu^\sfr(\lambda)\\
    &= \sum_{\lambda \vdash d} \frac{1}{z_\lambda} \sum_{k=0}^{d-2} \frac{(-1)^k P(\lambda)\chi_d^k(\lambda)}{q^k}\\
    &= \sum_{k=0}^{d-2} \frac{(-1)^k}{q^k} \left(\sum_{\lambda \vdash d} \frac{P(\lambda) \chi_d^k(\lambda)}{z_\lambda}\right)\\
    &= \sum_{k=0}^{d-2} \frac{(-1)^k \langle P, \chi_d^k \rangle}{q^k}.
\end{align*}
\end{proof}

The \'{e}tale cohomological approach to Theorem \ref{thm sf twisted gl} taken in \cite{CEF} connects squarefree polynomials over $\FF_q$ with the configuration space of points on the affine line. The geometric perspective seems to break down in the case of Theorem \ref{thm twisted gl}: There is no apparent correspondence between configurations of distinct points in $\RR^3$ and monic polynomials over $\FF_q$. We would be interested in a cogent geometric explanation for the relationship between the representations $H^{2k}(\PConf_d(\RR^3),\QQ)$ and the expected value of factorization statistics on $\poly_d(\FF_q)$.

\subsection{Asymptotic stability}
\label{sec asymp stab}
Church \cite[Thm. 1]{Church} showed that for all $k\geq 0$ and $n \geq 2$ the families of symmetric group representations $H^k(\PConf_d(\RR^n),\QQ)$ are \emph{representation stable}. We do not require the details of representation stability (the interested reader should consult \cite{Church Farb},) only the following fact \cite[Sec. 3.4]{CEF} which we take as a black box: If $P$ is a factorization statistic given by a character polynomial (see Example \ref{ex 1} (3)) and $A_d$ is a sequence of $S_d$-representations with characters $\alpha_d$ which exhibit ``representation stability,'' then the sequence of inner products $\langle P, \alpha_d \rangle$ is eventually constant. In that case we write $\langle P, \alpha\rangle$ for the limit of $\langle P, \alpha_d \rangle$ as $d\rightarrow\infty$.

Church, Ellenberg, and Farb use the representation stability of $H^k(\PConf_d(\CC),\QQ)$ to prove Theorem \ref{thm CEF asymp}.

\begin{thm}[{\cite[Thm. 1]{CEF} }]
\label{thm CEF asymp}
Let $P$ be a factorization statistic given by a character polynomial and write $\langle P, \phi^k\rangle$ for the limit of $\langle P, \phi_d^k \rangle$ as $d\rightarrow\infty$. Then
\[
    \lim_{d\rightarrow\infty}\frac{1}{q^d}\sum_{f\in \poly_d^\sfr(\FF_q)}P(f) = \sum_{k = 0}^\infty \frac{(-1)^k \langle P, \phi^k\rangle}{q^k}.
\]
\end{thm}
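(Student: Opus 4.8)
\emph{Plan.} I would obtain Theorem~\ref{thm CEF asymp} by passing to the limit $d\to\infty$ in the finite-degree identity \eqref{eqn CEF version}, which (as noted in the excerpt) is equivalent to the already-established Theorem~\ref{thm sf twisted gl}. Dividing \eqref{eqn CEF version} by $q^d$ gives
\[
\frac{1}{q^d}\sum_{f\in\sfpoly_d(\FF_q)}P(f)=\sum_{k=0}^{d-1}\frac{(-1)^k\langle P,\phi_d^k\rangle}{q^k},
\]
so everything reduces to showing that the right-hand side converges to $\sum_{k\ge0}(-1)^k\langle P,\phi^k\rangle q^{-k}$ as $d\to\infty$. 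Since $P$ is a character polynomial, the black box of Section~\ref{sec asymp stab} applies: the families $H^k(\PConf_d(\CC),\QQ)$ (the $n=2$ case of Church's representation stability theorem, with characters $\phi_d^k$) are representation stable, so for each fixed $k$ the sequence $\langle P,\phi_d^k\rangle$ is eventually constant, equal to $\langle P,\phi^k\rangle$.

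\emph{Main obstacle.} What remains is to move the limit inside the sum, i.e.\ to control the ``diagonal'' terms $k\approx d$. Concretely one needs a bound $|\langle P,\phi_d^k\rangle|\le B_k$ that is \emph{uniform in $d$} and satisfies $\sum_k B_k q^{-k}<\infty$; then, splitting $\sum_{k=0}^{d-1}$ at a cutoff $N$, the head converges termwise by representation stability while the tail is dominated by $\sum_{k\ge N}B_kq^{-k}$, which tends to $0$ uniformly in $d$ (this also makes the target series absolutely convergent), and the two halves combine to give the claim. Supplying this uniform bound is the crux of the argument.

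\emph{How to get the bound.} One route is to extract from representation stability an explicit stable range together with a polynomial-in-$k$ bound on $\langle P,\phi_d^k\rangle$ valid for all $d$, using that a character polynomial has bounded (weighted) degree; this is essentially the argument of Church, Ellenberg, and Farb. A more self-contained route, in keeping with the generating-function methods of this paper: applying the differential operator $P(a_1\partial_{a_1},a_2\partial_{a_2},\dots)$ to the product in \eqref{eqn sf second product identity} and then setting every $a_j=1$ produces the one-variable series $G_P(t)=\sum_{d\ge0}\bigl(\sum_{f\in\sfpoly_d(\FF_q)}P(f)\bigr)t^d$, and because $a_j\partial_{a_j}$ only touches the $j$-th Euler factor the computation collapses to $G_P(t)=\frac{1-qt^2}{1-qt}\,Q_P(t,q)$ with $Q_P$ rational in $t$ and all of its poles lying on the unit circle $|t|=1$. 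As $1/q<1$, the only dominant singularity of $G_P$ is the simple pole at $t=1/q$, so standard singularity analysis gives $\frac1{q^d}[t^d]G_P(t)=A+O(\theta^d)$ for some $\theta<1$, where $A=(1-\tfrac1q)Q_P(\tfrac1q,q)$; moreover the same bivariate structure shows that the coefficients $\langle P,\phi_d^k\rangle=(-1)^k[q^{d-k}][t^d]G_P(t)$ stabilize in $d$ and are polynomially bounded in $k$, uniformly in $d$. Either way, combining the uniform tail bound with the termwise limits identifies $\lim_{d\to\infty}\frac1{q^d}\sum_{f\in\sfpoly_d(\FF_q)}P(f)$ with $\sum_{k\ge0}(-1)^k\langle P,\phi^k\rangle q^{-k}$, which completes the proof.
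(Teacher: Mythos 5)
The paper contains no proof of Theorem~\ref{thm CEF asymp}: it is quoted from \cite{CEF}, with the remark that Church, Ellenberg, and Farb prove it via representation stability of $H^k(\PConf_d(\CC),\QQ)$ (the paper's own Theorem~\ref{thm asymp stab} is deduced in the same spirit, taking stabilization of the inner products as a black box). Your plan is sound, and you correctly isolate the one point the black box does not cover: eventual constancy of $\langle P,\phi_d^k\rangle$ for each fixed $k$ does not by itself justify exchanging $\lim_{d\to\infty}$ with the sum over $k$; one needs a bound $|\langle P,\phi_d^k\rangle|\le B_k$ uniform in $d$ with $\sum_k B_k q^{-k}<\infty$, which also yields convergence of the limiting series. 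Your first route to this bound is precisely the argument of \cite{CEF} that the paper cites. Your second route is genuinely different from anything in the paper or in \cite{CEF}, and is closer to the paper's own methods (compare the computation of $E_d(R)$ in Section~\ref{section example} and the references to Fulman and Chen): applying a polynomial in the operators $a_j\partial_{a_j}$ to $\prod_j(1+a_jt^j)^{M_j(q)}$ and setting all $a_j=1$ does give $G_P(t)=\frac{1-qt^2}{1-qt}\,Q_P(t,q)$ by the cyclotomic identity, where $Q_P$ is a polynomial in $q$ of degree bounded in terms of $P$ alone whose coefficients are rational in $t$ with poles only on $|t|=1$; expanding $\frac{1}{1-qt}=\sum_m q^m t^m$, the coefficient of $q^{d-k}t^d$ --- which equals $(-1)^k\langle P,\phi_d^k\rangle$ by \eqref{eqn CEF version}, read as a polynomial identity in $q$ --- is independent of $d$ once $d\ge k+\deg_q\bigl((1-qt^2)Q_P\bigr)$ and is polynomially bounded in $k$ uniformly in $d$, supplying both the stabilization and the dominating sequence. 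If you write this up, make that last coefficient extraction explicit, since the phrase ``the same bivariate structure shows'' is currently carrying the whole weight; with it, your argument is a complete, self-contained proof in the generating-function spirit of this paper, whereas the proof the paper points to rests on the representation-stability machinery of \cite{CEF}.
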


Church's theorem implies that for each $k$, $H^{2k}(\PConf_d(\RR^3),\QQ)$ is representation stable. Hyde and Lagarias showed that $H^k(\PConf_d(\CC)/\CC^\times,\QQ)\cong \beta_{[k]}(\Pi_d)$ as $S_d$-representations where $\beta_{[k]}(\Pi_d)$ are the \emph{rank-selected homology of the partition lattice}. Hersh and Reiner \cite[Thm. 1.8]{Hersh Reiner} showed that $\beta_{[k]}(\Pi_d)$ is representation stable. Therefore we deduce the asymptotic stability of expected values from Theorems \ref{thm twisted gl} and \ref{thm sf twisted gl}.

\begin{thm}[Asymptotic stability for expected values]
\label{thm asymp stab}
Let $P$ be a factorization statistic given by a character polynomial (see Section \ref{ex 1} (3).) Then
\[
    \lim_{d\rightarrow\infty}E_d(P) = \sum_{k=0}^\infty \frac{\langle P, \psi^k\rangle}{q^k} \hspace{3em}
    \lim_{d\rightarrow\infty}E_d^\sfr(P) = \sum_{k=0}^\infty \frac{(-1)^k\langle P, \chi^k\rangle}{q^k},
\]
where the limits are taken $1/q$-adically.
\end{thm}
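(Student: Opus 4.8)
The plan is to read off the coefficients of $E_d(P)$ and $E_d^\sfr(P)$ from Theorems \ref{thm twisted gl} and \ref{thm sf twisted gl}, invoke the representation stability of the relevant cohomology groups together with the black-box fact from \cite[Sec. 3.4]{CEF}, and then note that coefficientwise eventual stabilization is precisely $1/q$-adic convergence.

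First I would fix a character polynomial $P$. By Theorem \ref{thm twisted gl}, $E_d(P) = \sum_{k=0}^{d-1} \langle P, \psi_d^k\rangle\, q^{-k}$ is a polynomial in $1/q$ whose coefficient of $q^{-k}$ equals $\langle P, \psi_d^k\rangle$ for $0 \le k \le d-1$ and equals $0$ for $k \ge d$. Since $\lie_d^k \cong H^{2k}(\PConf_d(\RR^3),\QQ)$ is representation stable in $d$ for each fixed $k$ (Church's theorem, recalled above), the black-box fact gives that the sequence $\langle P, \psi_d^k\rangle$ is eventually constant in $d$; call its limit $\langle P, \psi^k\rangle$. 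So for each $k$ there is a threshold $D_k$ with $\langle P, \psi_d^k\rangle = \langle P, \psi^k\rangle$ for all $d \ge D_k$.

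Now I would fix $N \ge 0$ and set $D := \max(N+1, D_0, D_1, \ldots, D_N)$. For every $d \ge D$ and every $0 \le k \le N$ we have $k \le d-1$, so the coefficient of $q^{-k}$ in $E_d(P)$ is $\langle P, \psi_d^k\rangle = \langle P, \psi^k\rangle$. Hence $E_d(P)$ and the formal power series $\sum_{k \ge 0} \langle P, \psi^k\rangle\, q^{-k}$ agree in the coefficients of $q^0, q^{-1}, \ldots, q^{-N}$, i.e. their difference lies in $(q^{-1})^{N+1}\QQ[[q^{-1}]]$. Since $N$ was arbitrary, $E_d(P) \to \sum_{k \ge 0} \langle P, \psi^k\rangle\, q^{-k}$ in the $1/q$-adic topology, which is the first assertion. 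The squarefree statement follows by the identical argument: Theorem \ref{thm sf twisted gl} writes $E_d^\sfr(P)$ as a polynomial in $1/q$ with coefficient of $q^{-k}$ equal to $(-1)^k \langle P, \chi_d^k\rangle$, and $H^k(\PConf_d(\CC)/\CC^\times,\QQ) \cong \beta_{[k]}(\Pi_d)$ is representation stable by Hersh--Reiner, so $\langle P, \chi_d^k\rangle$ stabilizes to $\langle P, \chi^k\rangle$ and the same finite-truncation argument yields $1/q$-adic convergence to $\sum_{k \ge 0} (-1)^k \langle P, \chi^k\rangle\, q^{-k}$.

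I do not expect a genuine obstacle here, since both the black-box stabilization fact and the representation stability of the families $\lie_d^k$ and $\beta_{[k]}(\Pi_d)$ are cited inputs. The only point requiring care is the bookkeeping in the truncation step: one must choose $D$ large enough both to clear the finitely many stabilization thresholds $D_0, \ldots, D_N$ and to ensure $k \le d-1$, so that the $q^{-k}$-coefficient of the \emph{finite} sum defining $E_d(P)$ is actually $\langle P, \psi_d^k\rangle$ rather than the spurious value $0$.
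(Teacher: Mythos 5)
Your argument is correct and follows the same route as the paper: apply Theorems \ref{thm twisted gl} and \ref{thm sf twisted gl} to read off the $q^{-k}$-coefficients, invoke representation stability of $H^{2k}(\PConf_d(\RR^3),\QQ)$ (via Church) and of $H^k(\PConf_d(\CC)/\CC^\times,\QQ)\cong \beta_{[k]}(\Pi_d)$ (via Hersh--Reiner) together with the black-box fact from \cite[Sec. 3.4]{CEF} to stabilize the inner products, and conclude coefficientwise, i.e.\ $1/q$-adic, convergence. Your explicit truncation bookkeeping simply spells out what the paper leaves implicit.
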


\subsection{Constraint on $E_d(P)$ coefficients}

Theorem \ref{thm constraint} below identifies the total cohomology of $\PConf_d(\RR^3)$ with the regular representation $\QQ[S_d]$.

\begin{thm}
\label{thm constraint}
For each $d\geq 1$ there is an isomorphism of $S_d$-representations
\begin{equation}
\label{eqn cohom}
    \bigoplus_{k=0}^{d-1}H^{2k}(\PConf_d(\RR^3),\QQ) \cong \QQ[S_d],
\end{equation}
where $\QQ[S_d]$ is the regular representation of $S_d$.
\end{thm}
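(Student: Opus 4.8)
The plan is to compute the $S_d$-character of the left-hand side of \eqref{eqn cohom}, show it equals the character of the regular representation, and then invoke the fact that two finite-dimensional $\QQ$-representations of a finite group with the same character are isomorphic. By the theorem of Sundaram and Welker recalled in Section~\ref{sec config} we have $H^{2k}(\PConf_d(\RR^3),\QQ)\cong\lie_d^k$, so the character of $\bigoplus_{k=0}^{d-1}H^{2k}(\PConf_d(\RR^3),\QQ)$ at a permutation of cycle type $\lambda\vdash d$ is $\Psi_d(\lambda):=\sum_{k=0}^{d-1}\psi_d^k(\lambda)$. The regular representation $\QQ[S_d]$ has character $\rho_d$ with $\rho_d(\lambda)=d!$ if $\lambda=(1^d)$ and $\rho_d(\lambda)=0$ otherwise; since $z_{(1^d)}=d!$, this says $\rho_d(\lambda)/z_\lambda$ equals $1$ for $\lambda=(1^d)$ and $0$ otherwise. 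So it suffices to prove, for every $d\geq1$ and $\lambda\vdash d$,
\[
\frac{\Psi_d(\lambda)}{z_\lambda}=\begin{cases}1 & \lambda=(1^d),\\ 0 & \text{otherwise.}\end{cases}
\]

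To get this I would specialize the Hersh--Reiner identity \eqref{eqn first product identity} at $q=1$. This is legitimate because both sides are formal power series in $t$ (and the $a_j$) whose coefficients are polynomials in $q$: on the left the coefficient of $a^\lambda t^d$ is $\frac{1}{z_\lambda}\sum_{k=0}^{d-1}\psi_d^k(\lambda)q^{d-k}$, the characters $\psi_d^k$ being fixed integers independent of $q$, and on the right the exponents are the necklace polynomials $M_j(q)$. The elementary M\"{o}bius identity $\sum_{i\mid j}\mu(i)=1$ for $j=1$ and $0$ for $j>1$ gives $M_1(1)=1$ and $M_j(1)=0$ for $j>1$, so at $q=1$ the right-hand side of \eqref{eqn first product identity} collapses to $(1-a_1t)^{-1}=\sum_{m\geq0}a_1^m t^m$.

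Comparing the coefficient of $a^\lambda t^d$ on the two sides of the specialized identity then yields the displayed formula directly: for $\lambda=(1^d)$ we have $a^\lambda=a_1^d$ and read off $\Psi_d(1^d)/z_{(1^d)}=1$, while for any other $\lambda\vdash d$ the monomial $a^\lambda$ involves some $a_j$ with $j\geq2$ and hence does not occur on the right, forcing $\Psi_d(\lambda)=0$. Since $\QQ$ has characteristic zero, equality of the characters $\Psi_d=\rho_d$ upgrades to the asserted isomorphism $\bigoplus_{k=0}^{d-1}H^{2k}(\PConf_d(\RR^3),\QQ)\cong\QQ[S_d]$ of $S_d$-representations.

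I do not anticipate a real obstacle here: the argument is essentially a one-line specialization of a generating function already established in the proof of Theorem~\ref{thm rep interp}. The only points deserving a word of care are the justification that the formal identity \eqref{eqn first product identity} may be evaluated at $q=1$ and the M\"{o}bius computation $M_j(1)=[j=1]$. Alternatively one could quote the classical decomposition $\bigoplus_{\lambda\vdash d}\lie_\lambda\cong\QQ[S_d]$ (Klyachko; equivalently the Poincar\'{e}--Birkhoff--Witt decomposition of the free associative algebra, with $S_d$ permuting a distinguished set of generators), but the generating-function route keeps the argument self-contained within the framework of this paper.
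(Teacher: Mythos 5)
Your proposal is correct and is essentially the paper's own argument: the paper also reduces to checking that $\sum_{k}\psi_d^k$ is the regular character by specializing at $q=1$, where $M_1(1)=1$ and $M_j(1)=0$ for $j>1$, the only cosmetic difference being that the paper evaluates the splitting-measure formula of Theorem \ref{thm rep interp} at $q=1$ while you set $q=1$ directly in the Hersh--Reiner identity \eqref{eqn first product identity}, which is the same computation since Theorem \ref{thm rep interp} was obtained from that identity. Your closing remarks on justifying the $q=1$ specialization and the alternative via $\bigoplus_\lambda \lie_\lambda \cong \QQ[S_d]$ (PBW/Klyachko) match the paper's own comments following its proof.
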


\begin{proof}
Let $\rho$ be the character of $\bigoplus_{k=0}^{d-1}H^{2k}(\PConf_d(\RR^3),\QQ)$. Then
\[
    \rho = \sum_{k=0}^{d-1} \psi_d^k,
\]
where $\psi_d^k$ is the character of $H^{2k}(\PConf_d(\RR^3),\QQ)$. It suffices to show that $\rho$ is equal to the character of the regular representation, that is
\[
    \rho(\lambda) = \begin{cases} d! & \lambda = [1^d]\\ 0 & \text{otherwise.} \end{cases}
\]
By Theorem \ref{thm rep interp} we have
\[
    \nu(\lambda) = \frac{1}{z_\lambda}\sum_{k=0}^{d-1} \frac{\psi_d^k(\lambda)}{q^k},
\]
where $\nu$ is the splitting measure defined by
\[
    \nu(\lambda) = \frac{1}{q^d}\prod_{j\geq 1}\multi{M_j(q)}{m_j}.
\]
Let $\nu_1$ denote the splitting measure evaluated at $q = 1$. Then $\nu_1(\lambda) = \frac{\rho(\lambda)}{z_\lambda}$. On the other hand, $M_j(1) = 0$ for $j > 1$ and $M_1(1) = 1$ so
\[
    \nu_1(\lambda) = \prod_{j\geq 1}\multi{M_j(1)}{m_j} = \begin{cases} 1 & \lambda = [1^d]\\ 0 & \text{otherwise.} \end{cases}
\]
Since $z_{[1^d]} = d!$ the result follows.
\end{proof}

We have been told that, from the point of view of Lie theory, that  Theorem \ref{thm constraint} is a consequence of the Poincar\'{e}-Brikhoff-Witt theorem---this is outside our expertise. The proof of Theorem \ref{thm constraint} shows that \eqref{eqn cohom} also follows naturally from the splitting measure perspective which is how we came into it. The following corollary will be used in Section \ref{section example} to explain common phenomena that arise in expected value computations for factorization statistics.

\begin{cor}
\label{thm coeffs constraint}
Suppose $P$ is a factorization statistic defined on $\poly_d(\FF_q)$ which, viewed as a class function of $S_d$, is the character of an $S_d$-representation $V$. Let $E_d(P)$ be the expected value of $P$ on $\poly_d(\FF_q)$.
\begin{enumerate}
    \item $E_d(P)$ is a polynomial in $1/q$ of degree at most $d-1$ with non-negative integer coefficients.
    
    \item The evaluation of $E_d(P)$ at $q = 1$ is $E_d(P)_{q = 1} = \dim V$.
\end{enumerate}
\end{cor}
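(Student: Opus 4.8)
The plan is to derive both statements directly from Theorem~\ref{thm twisted gl} together with Theorem~\ref{thm constraint}, using the hypothesis that $P$ is the character of a genuine representation $V$. First I would write $E_d(P) = \sum_{k=0}^{d-1} \langle P, \psi_d^k\rangle q^{-k}$ via Theorem~\ref{thm twisted gl}. Since $\psi_d^k$ is the character of the $S_d$-representation $\lie_d^k \cong H^{2k}(\PConf_d(\RR^3),\QQ)$ and $P$ is the character of $V$, each inner product $\langle P, \psi_d^k\rangle = \dim \mathrm{Hom}_{S_d}(V, \lie_d^k)$ is the multiplicity-type pairing of two actual characters, hence a non-negative integer. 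This immediately gives that $E_d(P)$ is a polynomial in $1/q$ of degree at most $d-1$ with non-negative integer coefficients, proving part~(1).

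For part~(2), I would evaluate at $q = 1$: $E_d(P)_{q=1} = \sum_{k=0}^{d-1} \langle P, \psi_d^k\rangle = \langle P, \sum_{k=0}^{d-1}\psi_d^k\rangle = \langle P, \rho\rangle$, where $\rho$ is the character of $\bigoplus_{k=0}^{d-1} H^{2k}(\PConf_d(\RR^3),\QQ)$. By Theorem~\ref{thm constraint}, $\rho$ is the character of the regular representation $\QQ[S_d]$. The standard fact that $\langle \chi_V, \chi_{\QQ[S_d]}\rangle = \dim V$ for any $S_d$-representation $V$ then yields $E_d(P)_{q=1} = \dim V$, as claimed. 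Concretely, $\langle P, \rho\rangle = \frac{1}{d!}\sum_{\sigma} P(\sigma)\rho(\sigma) = \frac{1}{d!}P(\mathrm{id})\cdot d! = P(\mathrm{id}) = \dim V$, using that $\rho$ vanishes off the identity class.

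There is essentially no main obstacle here; the result is a clean corollary packaging two already-established theorems. The only point requiring a word of care is the interpretation of $\langle P, \psi_d^k\rangle$ as a non-negative integer: this is valid precisely because both $P$ and $\psi_d^k$ are characters of honest (not virtual) representations over $\QQ$, so their inner product computes the dimension of a $\mathrm{Hom}$-space. I would state this explicitly to make clear where the hypothesis ``$P$ is the character of an $S_d$-representation $V$'' is used — without it one only gets rational coefficients, consistent with the general remark earlier in the paper that positivity and integrality are not a priori evident.
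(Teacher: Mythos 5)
Your proposal is correct and follows essentially the same route as the paper's own proof: part (1) from Theorem \ref{thm twisted gl} plus the fact that $\langle P,\psi_d^k\rangle$ is a dimension of a Hom-space (hence a non-negative integer), and part (2) by evaluating at $q=1$, using bilinearity and Theorem \ref{thm constraint} to reduce to $\langle P,\chi_{\mathrm{reg}}\rangle = \dim V$. The only difference is cosmetic: you also spell out the computation $\langle P,\rho\rangle = P(\mathrm{id}) = \dim V$ explicitly, which the paper leaves to standard representation theory.
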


\begin{proof}
(1) Recall that the inner product $\langle \chi, \psi \rangle$ of characters is the dimension of the vector space of maps between the corresponding representations, hence is a non-negative integer. Thus if $P$ is an $S_d$-character then Theorem \ref{thm twisted gl} implies that
\[
    E_d(P) = \sum_{k=0}^{d-1} \frac{\langle P, \psi_d^k \rangle}{q^k},
\]
has non-negative coefficients.

(2) The inner product of class functions is bilinear. Therefore, by Theorem \ref{thm constraint}
\[
    E_d(P)_{q = 1} = \sum_{k=0}^{d-1} \langle P, \psi_d^k\rangle = \langle P, \sum_{k=0}^{d-1} \psi_d^k \rangle = \langle P, \chi_{\mathrm{reg}} \rangle.
\]
It follows from the general representation theory of finite groups that $\langle P, \chi_{\mathrm{reg}} \rangle = \dim V$. Therefore,
\[
    E_d(P)_{q = 1} = \dim V.
\]
\end{proof}

\section{Examples}
\label{section example}

The twisted Grothendieck-Lefschetz formulas (Theorems \ref{thm sf twisted gl} and \ref{thm twisted gl}) form a bridge connecting polynomial factorization statistics on the one hand and representations of the symmetric group and cohomology of configuration spaces on the other. Translating information back and forth across this bridge leads to an interesting interplay among these structures. In this section we first revisit the example of quadratic excess $Q$ to see how our results explain the properties of $E_d(Q)$ observed in the introduction. We finish with some results on expected values and the structure of $H^{2k}(\PConf_d(\RR^3),\QQ)$ using the constraint provided by Theorem \ref{thm constraint}.

\subsection{Quadratic excess}
Recall the quadratic excess factorization statistic $Q$ from the introduction: $Q(f)$ is defined as the difference between the number of reducible versus irreducible quadratic factors of $f$. Rephrasing this in terms of partitions, if $x_k(\lambda)$ is the number of parts of $\lambda$ of size $k$, then
\[
    Q(\lambda) = \binom{x_1(\lambda)}{2} - \binom{x_2(\lambda)}{1}.
\]
Let $\QQ[d]$ be the permutation representation of the symmetric group with basis $\{e_1, e_2, \ldots , e_d\}$ and consider the representation given by the second exterior power $\bigwedge^2 \QQ[d]$. This representation has dimension $\binom{d}{2}$ with a natural basis given by 
\[
    \{e_i \wedge e_j : i < j\}.
\]
If $\sigma \in S_d$ is a permutation, then the trace of $\sigma$ on $\bigwedge^2 \QQ[d]$ is
\begin{align*}
    \mathrm{Trace}(\sigma) &= \#\{\{i,j\} : \sigma \text{ fixes $i$ and $j$}\} - \#\{\{i,j\} : \sigma \text{ transposes $i$ and $j$}\}\\
    &= \binom{x_1(\sigma)}{2} - \binom{x_2(\sigma)}{1}\\
    &= Q(\sigma).
\end{align*}
Thus $Q$, viewed as a class function of $S_d$, is the character of $\bigwedge^2\QQ[d]$. It follows from Corollary \ref{thm coeffs constraint} that coefficients of $E_d(Q)$ are non-negative integers summing to $\binom{d}{2} = \dim \bigwedge^2 \QQ[d]$. The coefficientwise convergence of $E_d(Q)$ follows from Theorem \ref{thm asymp stab}. The $1/q$-adic limit of $E_d(Q)$ as $d\rightarrow\infty$ is a rational function of $q$, which explains the simple pattern emerging in the coefficients of $E_d(q)$. In particular, using \cite[Cor. 10]{Chen 2} we compute,
\begin{align*}
    \lim_{d\rightarrow\infty}E_d(Q) &= \frac{1}{2}\bigg(1 + \frac{1}{q}\bigg)\bigg(\frac{1}{1 - \frac{1}{q}}\bigg)^2 - \frac{1}{2}\bigg(1 - \frac{1}{q}\bigg)\bigg(\frac{1}{1 - \frac{1}{q^2}}\bigg)\\
    &= \tfrac{2}{q} + \tfrac{2}{q^2} + \tfrac{4}{q^3} + \tfrac{4}{q^4} + \tfrac{6}{q^5} + \tfrac{6}{q^6} + \tfrac{8}{q^7} + \tfrac{8}{q^8} + \tfrac{10}{q^9} + \ldots
\end{align*}

\subsection{Identifying irreducible components}
Theorem \ref{thm constraint} gives a constraint on the cohomology of $\PConf_d(\RR^3)$,
\[
    \bigoplus_{k=0}^{d-1}H^{2k}(\PConf_d(\RR^3),\QQ) \cong \QQ[S_d],
\]
where $\QQ[S_d]$ is the regular representation of the symmetric group. The regular representation of $S_d$ is well-understood: the irreducible representations of $S_d$ are indexed by partitions $\lambda\vdash d$, each irreducible $\Sp_\lambda$ is a direct summand of $\QQ[S_d]$ with multiplicity $f_\lambda := \dim \Sp_\lambda$. Thus Theorem \ref{thm constraint} tells us that the irreducible components $\Sp_\lambda$ of $\QQ[S_d]$ are distributed among the various degrees of cohomology on the left hand side of \eqref{eqn cohom}. The twisted Grothendieck-Lefschetz formula for $\poly_d(\FF_q)$ implies that the filtration of the regular representation given by Theorem \ref{thm constraint} completely determines and is determined by the expected values of factorization statistics on $\poly_d(\FF_q)$. We use Theorem \ref{thm constraint} to identify the degrees of some of the irreducible $S_d$-representations in the cohomology of $\PConf_d(\RR^3)$.

\subsection{Trivial representation} Let $\tr = \Sp_{[d]}$ be the one-dimensional \emph{trivial representation} of $S_d$. The character of the trivial representation is constant equal to 1. Interpreting the trivial character as a factorization statistic we have $E_d(1) = 1$ and Theorem \ref{thm twisted gl} implies
\[
    1 = E_d(1) = \sum_{k=0}^{d-1}\frac{\langle 1, \psi_d^k\rangle}{q^k}.
\]
Comparing coefficients of $1/q^k$ we conclude that $\langle 1, \psi_d^0\rangle = 1$ and $\langle 1, \psi_d^k\rangle = 0$ for $k > 0$. Hence, $\tr$ is a summand of $H^0(\PConf_d(\RR^3),\QQ)$. On the other hand, $\PConf_d(\RR^3)$ is path connected so $ H^0(\PConf_d(\RR^3),\QQ)$ is one-dimensional. Thus
\begin{equation}
\label{eqn trivial}
    H^0(\PConf_d(\RR^3),\QQ) \cong \tr,
\end{equation}
and $H^{2k}(\PConf_d(\RR^3),\QQ)$ has no trivial component for $k > 0$.

Recall that the characters $\chi_\lambda$ of the irreducible representations $\Sp_\lambda$ of $S_d$ form a $\QQ$-basis for the vector space of all class functions. If $P$ is a factorization statistic, then there are $a_\lambda(P)\in \QQ$ such that
\[
    P = \sum_{\lambda\vdash d} a_\lambda(P) \chi_\lambda,
\]
where $\chi_\lambda$ is the character of the irreducible representation $\Sp_\lambda$. In particular if $a_1(P) := a_{[d]}(P)$ is the coefficient of the trivial character in this decomposition, then we have the following corollary.

\begin{cor}
If $P$ is any factorization statistic and $a_1(P)$ is the coefficient of the trivial character in the expression of $P$ as a linear combination of irreducible $S_d$-characters, then
\[
    a_1(P) = \lim_{q\rightarrow\infty} E_d(P).
\]
Hence $a_1(P) = 0$ if and only if the expected value of $P$ approaches 0 for large $q$.
\end{cor}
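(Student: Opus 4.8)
The plan is to read off both assertions directly from the twisted Grothendieck--Lefschetz formula for $\poly_d(\FF_q)$ (Theorem \ref{thm twisted gl}) together with the identification $H^0(\PConf_d(\RR^3),\QQ)\cong\tr$ recorded in \eqref{eqn trivial}. First I would apply Theorem \ref{thm twisted gl} to write
\[
    E_d(P) = \sum_{k=0}^{d-1}\frac{\langle P, \psi_d^k\rangle}{q^k},
\]
which exhibits $E_d(P)$ as a polynomial in $1/q$. Letting $q\rightarrow\infty$ annihilates every term with $k\geq 1$, leaving $\lim_{q\rightarrow\infty} E_d(P) = \langle P, \psi_d^0\rangle$.

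The second step is to identify $\langle P, \psi_d^0\rangle$ with the coefficient $a_1(P)$. Since $\psi_d^0$ is the character of $H^0(\PConf_d(\RR^3),\QQ)$, and this space is isomorphic to the trivial representation $\tr = \Sp_{[d]}$ by \eqref{eqn trivial}, we have $\psi_d^0 = \chi_{[d]}$. Expanding $P = \sum_{\lambda\vdash d} a_\lambda(P)\chi_\lambda$ in the orthonormal basis of irreducible characters and pairing with $\chi_{[d]}$ gives $\langle P, \psi_d^0\rangle = \langle P, \chi_{[d]}\rangle = a_{[d]}(P) = a_1(P)$. Combining with the first step yields $a_1(P) = \lim_{q\rightarrow\infty} E_d(P)$.

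The final sentence of the corollary is then immediate: $a_1(P) = 0$ if and only if $\lim_{q\rightarrow\infty} E_d(P) = 0$, i.e.\ the expected value of $P$ approaches $0$ for large $q$. I do not anticipate any real obstacle here; the only point worth being careful about is that the limit in question is the genuine limit of a polynomial in $1/q$ of degree at most $d-1$, which is guaranteed by Theorem \ref{thm twisted gl} (equivalently Corollary \ref{thm coeffs constraint}), so passing to $q\rightarrow\infty$ is unambiguous.
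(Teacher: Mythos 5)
Your argument is correct and is essentially the paper's intended reasoning: the corollary is stated right after the identification $H^0(\PConf_d(\RR^3),\QQ)\cong\tr$ in \eqref{eqn trivial}, and it follows exactly as you say by taking $q\rightarrow\infty$ in Theorem \ref{thm twisted gl} and using orthonormality of the irreducible characters to get $\langle P,\psi_d^0\rangle = a_1(P)$. One minor remark: you only need that $E_d(P)$ is a polynomial in $1/q$, which comes straight from Theorem \ref{thm twisted gl}; citing Corollary \ref{thm coeffs constraint} is unnecessary (and it assumes $P$ is a character, which a general factorization statistic need not be).
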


\subsection{Sign representation} Let $\Sgn := \Sp_{[1^d]}$ be the one-dimensional \emph{sign representation}. The character of $\Sgn$ is $\sgn(\lambda) = (-1)^{d - \ell(\lambda)}$, or equivalently $\sgn([j]) = (-1)^{j - 1}$ for a partition $[j]$ with one part of size $j$ and then $\sgn$ extends multiplicatively to partitions with more than one part. Viewing $\sgn$ as a factorization statistic Theorem \ref{thm twisted gl} implies
\[
    E_d(\sgn) = \sum_{k=0}^{d-1} \frac{\langle \sgn, \psi_d^k\rangle}{q^k}.
\]
On the other hand, Corollary \ref{thm coeffs constraint} tells us that $\langle \sgn, \psi_d^k \rangle = 1$ for exactly one $k$ and is 0 otherwise---which value of $k$ is it?

\begin{thm}[\cite{Hyde}]
\label{thm sgn}
For each $d\geq 1$,
\[
    E_d(\sgn) = \frac{1}{q^{\lfloor d/2 \rfloor}}.
\]
Hence $H^{2\lfloor d/2 \rfloor}(\PConf_d(\RR^3),\QQ)$ is the unique cohomological degree with a $\Sgn$ summand.
\end{thm}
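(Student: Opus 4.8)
The plan is to compute $E_d(\sgn)$ directly from the splitting measure by the generating-function method used to prove Theorem \ref{thm rep interp}, and then extract the statement about $H^{2\lfloor d/2\rfloor}(\PConf_d(\RR^3),\QQ)$ from Theorem \ref{thm twisted gl} together with Corollary \ref{thm coeffs constraint}. The point is that $\sgn$, as a factorization statistic, is multiplicative on partitions with $\sgn([j]) = (-1)^{j-1}$, so it is (up to the twist $t\mapsto -t$) the $\FF_q[x]$-analogue of the Liouville function, and the relevant generating function can therefore be put in closed form by specializing the product identities already in the paper.

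First I would specialize $a_j = \sgn([j]) = (-1)^{j-1}$ in \eqref{eqn second product identity}. Since then $a^\lambda = \sgn(\lambda)$ and $\sum_{\lambda\vdash d}\sgn(\lambda)\multi{\AA^1}{\lambda} = q^d E_d(\sgn)$, this gives
\[
    \sum_{d\geq 0} q^d E_d(\sgn)\,t^d \;=\; \prod_{j\geq 1}\bigl(1-(-1)^{j-1}t^j\bigr)^{-M_j(q)} \;=\; \prod_{j\geq 1}\bigl(1+(-1)^{j}t^j\bigr)^{-M_j(q)},
\]
using $1-(-1)^{j-1}t^j = 1+(-1)^j t^j$. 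Next I would evaluate the reciprocal: specializing $a_j = (-1)^j$ in \eqref{eqn sf second product identity} and noting that $a^\lambda = (-1)^{\sum_j j m_j} = (-1)^d$ for $\lambda\vdash d$, hence $\sum_{\lambda\vdash d}(-1)^d\binom{\AA^1}{\lambda} = (-1)^d|\sfpoly_d(\FF_q)|$, together with the elementary evaluation $\sum_{d\geq 0}|\sfpoly_d(\FF_q)|u^d = \tfrac{1-qu^2}{1-qu}$ (from $|\sfpoly_d(\FF_q)| = q^d - q^{d-1}$ for $d\geq 2$), yields
\[
    \prod_{j\geq 1}\bigl(1+(-1)^{j}t^j\bigr)^{M_j(q)} \;=\; \sum_{d\geq 0}|\sfpoly_d(\FF_q)|(-t)^d \;=\; \frac{1-qt^2}{1+qt}.
\]
Therefore $\sum_{d\geq 0} q^d E_d(\sgn)\,t^d = \frac{1+qt}{1-qt^2} = (1+qt)\sum_{m\geq 0}q^m t^{2m}$, and comparing the coefficient of $t^d$ on both sides gives $q^d E_d(\sgn) = q^{\lceil d/2\rceil}$, i.e. $E_d(\sgn) = q^{\lceil d/2\rceil - d} = q^{-\lfloor d/2\rfloor}$.

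For the cohomological consequence, $\sgn$ is the character of the one-dimensional representation $\Sgn$, so $\langle\sgn,\psi_d^k\rangle$ equals the multiplicity of $\Sgn$ in $H^{2k}(\PConf_d(\RR^3),\QQ)\cong\lie_d^k$. By Theorem \ref{thm twisted gl} we then have $q^{-\lfloor d/2\rfloor} = E_d(\sgn) = \sum_{k=0}^{d-1}\langle\sgn,\psi_d^k\rangle\,q^{-k}$, and since by Corollary \ref{thm coeffs constraint} the coefficients on the right are non-negative integers summing to $\dim\Sgn = 1$, comparing coefficients of $q^{-k}$ forces $\langle\sgn,\psi_d^k\rangle = 1$ for $k = \lfloor d/2\rfloor$ and $0$ otherwise (note $\lfloor d/2\rfloor \leq d-1$ for all $d\geq 1$, so this degree is in range). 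Equivalently, $H^{2\lfloor d/2\rfloor}(\PConf_d(\RR^3),\QQ)$ is the unique cohomological degree carrying a $\Sgn$ summand. I do not anticipate a serious obstacle: the only points needing care are the sign bookkeeping that turns the $\sgn$-weighted series into the squarefree-count series under $t\mapsto -t$, and the routine verification that all of the above are legitimate identities of formal power series in $t$ over $\QQ(q)$, exactly as in the proof of Theorem \ref{thm rep interp}.
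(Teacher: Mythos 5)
Your proof is correct, and it takes a genuinely different route from the paper: the paper does not prove Theorem \ref{thm sgn} here at all, but defers to \cite{Hyde}, where the result is obtained from a \emph{liminal reciprocity} theorem relating factorization statistics on $\poly_d(\FF_q)$ to limiting values of squarefree statistics for $\FF_q[x_1,\ldots,x_n]$ as $n\to\infty$. Your argument instead stays entirely inside this paper's generating-function toolkit: specializing $a_j=(-1)^{j-1}$ in \eqref{eqn second product identity} gives $\sum_{d\geq 0} q^d E_d(\sgn)\,t^d=\prod_{j\geq 1}\bigl(1+(-1)^j t^j\bigr)^{-M_j(q)}$, and the reciprocal product is identified, via the specialization $a_j=(-1)^j$ of \eqref{eqn sf second product identity} and the standard count $|\sfpoly_d(\FF_q)|=q^d-q^{d-1}$ for $d\geq 2$, with $\sum_{d\geq 0}|\sfpoly_d(\FF_q)|(-t)^d=\frac{1-qt^2}{1+qt}$; inverting yields $\frac{1+qt}{1-qt^2}$ and hence $q^dE_d(\sgn)=q^{\lceil d/2\rceil}$, i.e.\ $E_d(\sgn)=q^{-\lfloor d/2\rfloor}$ (the sign bookkeeping $1-(-1)^{j-1}t^j=1+(-1)^jt^j$ and $a^\lambda=(-1)^d$ is right, and the formula checks against direct counts for $d=2,3$). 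The cohomological conclusion then follows exactly as you say from Theorem \ref{thm twisted gl} together with Corollary \ref{thm coeffs constraint} (or Theorem \ref{thm constraint} directly): the multiplicities $\langle\sgn,\psi_d^k\rangle$ are non-negative integers summing to $\dim\Sgn=1$, so the single power $q^{-\lfloor d/2\rfloor}$, which lies in the admissible range $0\leq\lfloor d/2\rfloor\leq d-1$, pins down the unique degree; the coefficient comparison is legitimate because both sides are polynomials in $1/q$ agreeing for all prime powers $q$ (equivalently the identities hold in $\QQ(q)[[t]]$). What your route buys is a short, self-contained proof of both halves of the theorem using only identities already established in the proof of Theorem \ref{thm rep interp}; what the paper's cited route buys is the structural connection to limiting squarefree statistics in many variables, which the direct computation does not by itself explain.
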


We prove Theorem \ref{thm sgn} in \cite{Hyde} using our \emph{liminal reciprocity theorem} which relates factorization statistics in $\poly_d(\FF_q)$ with the limiting values of \emph{squarefree} factorization statistics for $\FF_q[x_1, x_2, \ldots, x_n]$ as the number of variables $n$ tends to infinity.

Theorem \ref{thm sgn} has a surprising consequence. The \emph{even type} factorization statistic $ET$ is defined by $ET(f) = 1$ when the factorization type of $f$ is an even partition and $ET(f) = 0$ otherwise. Thus the expected value $E_d(ET)$ is the probability of a random polynomial in $\poly_d(\FF_q)$ having even factorization type. One might guess that a polynomial should be just as likely to have an even versus odd factorization type. However, notice that
\[
    ET = \tfrac{1}{2}(1 + \sgn)
\]
as class functions of $S_d$. It follows by the linearity of expectation that
\[
    E_d(ET) = \tfrac{1}{2}(E_d(1) + E_d(\sgn)) = \tfrac{1}{2}\big(1 + \tfrac{1}{q^{\lfloor d/2 \rfloor}}\big).
\]
The leading term of this probability is $1/2$ as we expected, but there is a slight bias toward a polynomial having even factorization type coming from the sign representation and the degree of cohomology in which it appears. For comparison we remark that in the squarefree case the probability of a random polynomial in $\poly_d^\sfr(\FF_q)$ having even factorization type is exactly
\[
    E_d^\sfr(ET) = \tfrac{1}{2},
\]
matching our original guess.

\subsection{Standard representation}
Let $\QQ[d]$ be the permutation representation of $S_d$. The irreducible decomposition of $\QQ[d]$ is
\[
    \QQ[d] \cong \tr \oplus \Std,
\]
where $\Std = \Sp_{[d-1,1]}$ is the $(d-1)$-dimensional \emph{standard representation} of $S_d$. Let $R$ be the character of $\QQ[d]$. If $\sigma\in S_d$, then $R(\sigma)$ is the number of fixed points of $\sigma$ acting on the set $\{1, 2, \ldots, d\}$; hence $R(\lambda) = x_1(\lambda)$ is the number of parts of $\lambda$ of size one. Viewed as a factorization statistic, $R(f)$ counts the number of $\FF_q$-roots of $f$ with multiplicity.

\begin{thm}
Let $R(f)$ be the number of $\FF_q$-roots with multiplicity of $f\in \poly_d(\FF_q)$. Then the expected value $E_d(R)$ of $R$ on $\poly_d(\FF_q)$ is
\begin{equation}
\label{eqn roots}
    E_d(R) = \frac{1 - \frac{1}{q^d}}{1 - \frac{1}{q}} = 1 + \frac{1}{q} + \frac{1}{q^2} + \frac{1}{q^3} + \ldots + \frac{1}{q^{d-1}}.
\end{equation}
It follows that the multiplicity of $\Std$ in $H^{2k}(\PConf_d(\RR^3),\QQ)$ is 1 for $0 < k < d$.
\end{thm}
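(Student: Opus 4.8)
The plan is to compute $E_d(R)$ directly by an elementary count over $\poly_d(\FF_q)$, and then to read off the $\Std$-multiplicities in the cohomology of $\PConf_d(\RR^3)$ by matching coefficients of $1/q^k$ in the twisted Grothendieck-Lefschetz formula of Theorem \ref{thm twisted gl}. Neither part is hard; I expect the only delicate point to be the bookkeeping around $k=0$, where the trivial summand of $\QQ[d]$ already accounts for all of $\langle R,\psi_d^0\rangle$, so that the standard representation is forced to supply the entire discrepancy in the degrees $0<k<d$.

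For the count, I would write $R(f) = \sum_{a\in\FF_q} v_a(f)$, where $v_a(f)$ is the multiplicity of $a$ as a root of $f$, equivalently the largest $k$ with $(x-a)^k\mid f$; summing the multiplicities of the $\FF_q$-roots of $f$ counts exactly the linear factors of $f$ over $\FF_q$ with multiplicity, which is $R(f)$. Summing over $f$ and interchanging the order of summation,
\[
    \sum_{f\in\poly_d(\FF_q)} R(f) = \sum_{a\in\FF_q}\sum_{k=1}^{d} \#\{\, f\in\poly_d(\FF_q) : (x-a)^k\mid f \,\}.
\]
For $1\le k\le d$ a monic degree $d$ polynomial divisible by $(x-a)^k$ is uniquely $(x-a)^k g$ with $g$ monic of degree $d-k$, so the inner cardinality is $q^{d-k}$. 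Hence $\sum_f R(f) = q\sum_{k=1}^{d} q^{d-k}$, and dividing by $|\poly_d(\FF_q)| = q^d$ gives
\[
    E_d(R) = \sum_{k=1}^{d}\frac{1}{q^{k-1}} = \sum_{j=0}^{d-1}\frac{1}{q^{j}} = \frac{1 - 1/q^d}{1 - 1/q},
\]
which is \eqref{eqn roots}.

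For the cohomological consequence, recall from the discussion preceding the theorem that $R$ is the character of $\QQ[d]\cong\tr\oplus\Std$, so $R = \tr + \chi_{\Std}$ as class functions of $S_d$, where $\chi_{\Std}$ is the character of $\Std$. Theorem \ref{thm twisted gl} then gives $E_d(R) = \sum_{k=0}^{d-1}\langle R,\psi_d^k\rangle/q^k$; comparing this with the polynomial in $1/q$ just computed and matching coefficients yields $\langle R,\psi_d^k\rangle = 1$ for every $0\le k\le d-1$. Writing $\langle R,\psi_d^k\rangle = \langle\tr,\psi_d^k\rangle + \langle\chi_{\Std},\psi_d^k\rangle$ and using $\langle\tr,\psi_d^0\rangle = 1$ and $\langle\tr,\psi_d^k\rangle = 0$ for $k>0$ (from \eqref{eqn trivial} together with the observation that $H^{2k}(\PConf_d(\RR^3),\QQ)$ has no trivial component for $k>0$), one concludes that the multiplicity $\langle\chi_{\Std},\psi_d^k\rangle$ of $\Std$ in $H^{2k}(\PConf_d(\RR^3),\QQ)$ is $0$ when $k=0$ and $1$ when $1\le k\le d-1$, i.e. for $0<k<d$. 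For $k\ge d$ the representation $\lie_d^k$ vanishes, since every partition of $d$ has rank at most $d-1$, so these exhaust the relevant cohomological degrees.
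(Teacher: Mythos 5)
Your proposal is correct, but the computation of $E_d(R)$ takes a genuinely different route from the paper. You obtain the expected value by an elementary double count: writing $R(f)=\sum_{a\in\FF_q}v_a(f)$, expressing each multiplicity as $v_a(f)=\#\{k\geq 1 : (x-a)^k\mid f\}$, and counting $q^{d-k}$ monic polynomials divisible by $(x-a)^k$, which immediately gives $\sum_f R(f)=q\sum_{k=1}^d q^{d-k}$ and hence \eqref{eqn roots}. The paper instead stays inside its generating-function framework: it applies the operator $a_1\frac{\partial}{\partial a_1}$ to the splitting-measure identity \eqref{eqn formal identity}, specializes $a_j\mapsto t^j$, and invokes the cyclotomic identity $\frac{1}{1-qt}=\prod_{j\geq 1}\bigl(\frac{1}{1-t^j}\bigr)^{M_j(q)}$ to arrive at $\sum_{d\geq 1}E_d(R)t^d=\frac{t}{1-t/q}\cdot\frac{1}{1-t}$. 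Your count is shorter and completely self-contained, needing no formal power series or the cyclotomic identity; the paper's derivation buys uniformity with the rest of the article and illustrates a template (differentiating the splitting-measure product in the variables $a_j$) that extends to other character polynomials such as $x_j$ for $j>1$, where a direct count is less immediate. The second half of your argument --- matching coefficients of $1/q^k$ in Theorem \ref{thm twisted gl} (valid since both sides are polynomials in $1/q$ agreeing for infinitely many $q$), decomposing $R=\tr+\chi_{\Std}$, and using \eqref{eqn trivial} to isolate the trivial summand in degree $0$ --- is exactly how the paper concludes, and your remark that $\lie_d^k=0$ for $k\geq d$ correctly disposes of the remaining degrees.
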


\begin{proof}
First note that
\[
    E_d(R) = \frac{1}{q^d}\sum_{f\in \poly_d(\FF_q)}R(f) = \sum_{\lambda \vdash d} x_1(\lambda)\nu(\lambda),
\]
where $\nu$ is the splitting measure. In the course of proving Theorem \ref{thm rep interp} we derived the following formal power series identity,
\begin{equation}
\label{eqn formal identity}
    \sum_{d\geq 0}\sum_{\lambda \vdash d}\nu(\lambda)a^\lambda = \prod_{j\geq 1}\left(\frac{1}{1 - a_j/q^j}\right)^{M_j(q)}.
\end{equation}

Consider the effect of the operator $a_1 \frac{\partial}{\partial a_1}$ on \eqref{eqn formal identity}. On the left hand side we get
\[
    a_1 \frac{\partial}{\partial a_1}\sum_{d\geq 0}\sum_{\lambda \vdash d}\nu(\lambda)a^\lambda = \sum_{d\geq 1}\sum_{\lambda \vdash d}x_1(\lambda)\nu(\lambda)a^\lambda.
\]
On the right hand side we have
\[
    a_1 \frac{\partial}{\partial a_1}\prod_{j\geq 1}\left(\frac{1}{1 - a_j/q^j}\right)^{M_j(q)} = \frac{M_1(q)a_1}{q(1 - a_1/q)}\prod_{j\geq 1}\left(\frac{1}{1 - a_j/q^j}\right)^{M_j(q)}.
\]
Now substitute $a_j \mapsto t^j$ for all $j$ to arrive at
\[
    \sum_{d\geq 1}\sum_{\lambda \vdash d}x_1(\lambda)\nu(\lambda)t^d = \sum_{d\geq 1}E_d(R)t^d,
\]
on the left and
\begin{align*}
    \frac{M_1(q)t}{q(1 - t/q)}\prod_{j\geq 1}\left(\frac{1}{1 - t^j/q^j}\right)^{M_j(q)} &= \frac{t}{1 - t/q}\prod_{j\geq 1}\left(\frac{1}{1 - (t/q)^j}\right)^{M_j(q)}\\
    &= \frac{t}{1 - t/q} \cdot \frac{1}{1 - t}
\end{align*}
on the right, where the last equality is a consequence of the \emph{cyclotomic identity}:
\[
    \frac{1}{1 - qt} = \prod_{j\geq 1}\left(\frac{1}{1 - t^j}\right)^{M_j(q)}.
\]
Together this becomes
\begin{equation}
\label{eqn specialization}
    \sum_{d\geq 1}E_d(R)t^d = \frac{t}{1 - t/q} \cdot \frac{1}{1 - t}.
\end{equation}
Expanding the right hand side of \eqref{eqn specialization} gives
\[
    \frac{t}{1 - t/q} \cdot \frac{1}{1 - t} = \frac{1}{1 - t}\sum_{d\geq 1} \frac{1}{q^{d-1}}t^d = \sum_{d\geq 1}\left(\frac{1 - \frac{1}{q^d}}{1 - \frac{1}{q}}\right)t^d.
\]
Comparing coefficients of $t^d$ we conclude that
\[
    E_d(R) = \frac{1 - \frac{1}{q^d}}{1 - \frac{1}{q}} = 1 + \frac{1}{q} + \frac{1}{q^2} + \frac{1}{q^3} + \ldots + \frac{1}{q^{d-1}}.
\]
The assertions about the multiplicity of $\Std$ in $H^{2k}(\PConf_d(\RR^3),\QQ)$ follow from Theorem \ref{thm twisted gl} and \eqref{eqn trivial}.
\end{proof}


\begin{thebibliography}{99}


\bibitem{Calderbank Hanlon Robinson}
A. R. Calderbank, P. Hanlon, and R. W. Robinson, Partitions into even and odd block size and some unusual characters of the symmetric groups. \emph{Proc. London Math. Soc.} \textbf{3} (1986), no. 2, 288-320.

\bibitem{Chen 1}
W. Chen, "Analytic number theory for 0-cycles." \emph{Math. Proc. Cambridge Philos. Soc.}, Cambridge University Press, (2017), 1-24.

\bibitem{Chen 2}
W. Chen, Twisted cohomology of configuration spaces and spaces of maximal tori via point-counting, arXiv:1603.03931.

\bibitem{Church}
T. Church, Homological stability for configuration spaces of manifolds. \emph{Invent. Math.} \textbf{188} (2012) 465-504.

\bibitem{Church Farb}
T. Church, B. Farb, Representation theory and homological stability. \emph{Adv. Math.} \textbf{245} (2013) 250-314.

\bibitem{CEF}
T. Church, J. Ellenberg, and B. Farb. Representation stability in cohomology and asymptotics
for families of varieties over finite fields. \emph{Contemp. Math.} \textbf{620} (2014),
1-54.

\bibitem{Fulman}
J. Fulman, A generating function approach to counting theorems for square-free polynomials and maximal tori, \emph{Ann. Comb.} \textbf{20} (2016), 587-599.

\bibitem{Gadish}
N. Gadish, A trace formula for the distribution of rational $G$-orbits in
ramified covers, adapted to representation stability, \emph{to appear in NY J. of Math}.

\bibitem{Hanlon 1}
P. Hanlon, The action of $S_n$ on the components of the Hodge decomposition of Hochschild homology.
\emph{Michigan Math. J.} \textbf{37} (1990), 105-124.

\bibitem{Hast Matei}
D. Hast, V. Matei, Higher moments of arithmetic functions in short intervals: a geometric perspective, arXiv:1604.02067 (2016).

\bibitem{Hersh Reiner}
P. Hersh, V. Reiner. Representation Stability for Cohomology of Configuration Spaces in $\RR^d$. \emph{Internat. Math. Res. Notices} \textbf{2017} (2016), no. 5, 1433-1486.

\bibitem{Hyde}
T. Hyde, Liminal factorization statistics and reciprocity, \emph{In preparation.}

\bibitem{Hyde Lagarias}
T. Hyde, J. C. Lagarias, Polynomial splitting measures and cohomology of the pure braid group. \emph{Arnold Math. J.} (2017), 1-31.

\bibitem{Rosen}
M. Rosen, \emph{Number theory in function fields}. \textbf{120} Springer Science \& Business Media,  (2013).

\bibitem{Sundaram Welker}
S. Sundaram and V. Welker, Group actions on arrangements of linear subspaces and applications to
configuration spaces. \emph{Trans. Amer. Math. Soc.} \textbf{349} (1997), no. 4, 1389-1420.
\end{thebibliography}
\end{document}